\DeclareMathOperator*{\argmin}{argmin}
\DeclareMathOperator*{\argmax}{argmax}
\DeclarePairedDelimiter\floor{\lfloor}{\rfloor}
\newcommand{\bq}{\begin{equation}}
\newcommand{\eq}{\end{equation}}
\newcommand{\R}{\mathbb{R}}
\newcommand{\Z}{\mathbb{Z}}
\newcommand{\abs}[1]{\left\vert#1\right\vert}
\newcommand{\G}{\mathcal{G}}
\newcommand{\Nf}{\mathcal{N}}
\newcommand{\bO}{\mathcal{O}}
\newcommand{\Dt}{\mathcal{D}}
\newcommand{\Af}{\mathcal{A}}
\newcommand{\Sf}{\mathcal{S}}
\newcommand{\MA}{Monge-Amp\`ere\xspace}
\algnewcommand{\LineComment}[1]{\State \(\triangleright\) #1}
\newtheorem{theorem}{Theorem}
\theoremstyle{lemma}
\newtheorem{lemma}[theorem]{Lemma}
\newtheorem{definition}[theorem]{Definition}
\newtheorem{remark}[theorem]{Remark}
\theoremstyle{remark}
\newcommand\appendix@section[1]{%
\refstepcounter{section}%
\orig@section*{Appendix \@Alph\c@section: #1}%
}
\let\orig@section\section
\g@addto@macro\appendix{\let\section\appendix@section}
\begin{document}

\title[Meshfree finite difference methods]{Meshfree finite difference approximations for functions of the eigenvalues of the Hessian}

\author{Brittany D. Froese}
\thanks{This work was partially supported by NSF DMS-1619807.}
\address{Department of Mathematical Sciences, New Jersey Institute of Technology, University Heights, Newark, NJ 07102}
\email{bdfroese@njit.edu}

\begin{abstract}
We introduce meshfree finite difference methods for approximating nonlinear elliptic operators that depend on second directional derivatives or the eigenvalues of the Hessian.  Approximations are defined on unstructured point clouds, which allows for very complicated domains and a non-uniform distribution of discretisation points.  The schemes are monotone, which ensures that they converge to the viscosity solution of the underlying PDE as long as the equation has a comparison principle.  Numerical experiments demonstrate convergence for a variety of equations including problems posed on random point clouds, complex domains, degenerate equations, and singular solutions.
\end{abstract}

\date{\today}    
\maketitle

\section{Introduction}\label{sec:intro}

In this article we introduce meshfree finite difference methods for approximating a class of nonlinear elliptic partial differential equations (PDEs) that can be written in terms of second directional derivatives and/or the eigenvalues of the Hessian matrix in two dimensions.  This encompasses a number of important equations including Pucci minimal/maximal equations, a PDE for the convex envelope of a function, certain obstacle problems, and the \MA equation.  The methods are defined on unstructured point clouds, which allows for non-uniform distribution of discretisation points and complicated geometries.  As long as the PDE satisfies a comparison principle, these approximations are guaranteed to converge to the weak (viscosity) solution of the underlying PDE.

\subsection{Background}\label{sec:background}
Fully nonlinear elliptic partial differential equations (PDEs) arise in numerous applications including reflector/refractor design~\cite{GlimmOlikerReflectorDesign}, meteorology~\cite{Cullen}, differential geometry~\cite{Caf_MAGeom}, seismology~\cite{EFWass}, astrophysics~\cite{FrischUniv}, mesh generation~\cite{Budd}, computer graphics~\cite{Osher_book}, and mathematical finance~\cite{FlemingSoner}.  Because of the prevalence of these equations in applications, the development of robust numerical methods is a priority.  

In recent years, the numerical solution of these equations has received a great deal of attention, and several new methods have been developed including finite difference methods~\cite{BFO_MA,FinnGrid,Loeper,Saumier,SulmanWilliamsRussell}, finite element methods~\cite{Awanou,Bohmer,BrennerNeilanMA2D,Smears}, least squares methods~\cite{DGnum2006}, and methods involving fourth-order regularisation terms~\cite{FengNeilan}.  However, these methods are not designed to compute weak solutions. When the ellipticity of the equation is degenerate or no smooth solution exists, methods become very slow, are unstable, or converge to an incorrect solution.

Using a framework developed by Barles and Souganidis~\cite{BSnum}, provably convergent (monotone) methods have recently been constructed for several fully nonlinear equations~\cite{FO_MATheory,ObermanDiffSchemes,ObermanWS}.  Methods with a similar flavour have been developed by constructing Markov chain approximations for equations with a control interpretation~\cite{Bonnans_HJB,Kushner}.  However, these methods are typically defined on uniform Cartesian grids and do not lend themselves to adaptivity or complicated geometries.

State-of-the-art methods have recently been applied to problems in refractor design~\cite{FroeseOptics}, which involve the solution of a two-dimensional \MA equation with degeneracy.  In that setting, it was desirable to introduce large gradients into the data.  Non-monotone methods were found to be unstable, while monotone methods were restricted to Cartesian grids and could not effectively resolve the large gradients in the data.  In order to improve results in this and other applications, it is necessary to develop convergent, adaptive methods for solving fully nonlinear elliptic equations. 

\subsection{Contribution of this work}\label{sec:contribution}
This article introduces a framework for constructing convergent approximations of elliptic equations on unstructured point clouds, which is a first step towards the adaptive methods that are needed by applications.  We focus on two-dimensional equations that can be written in terms of various second-directional derivatives,
\bq\label{eq:PDE1}
F(x,u(x),u_{\theta\theta}(x);\theta\in\Af\subset[0,2\pi)) = 0,
\eq
where the admissible set $\Af$ is used to characterise a finite subset of all unit vectors in $\R^2$.

We also consider functions of the eigenvalues $\lambda_-(D^2u(x)) \leq \lambda_+(D^2u(x))$ of the Hessian matrix,
\bq\label{eq:PDE2}
F\left(x,u(x),\lambda_-(D^2u(x)),\lambda_+(D^2u(x))\right) = 0,
\eq
which can be written in terms of the minimum and maximum second directional derivative over all possible directions in~$\R^2$.

Equations~\eqref{eq:PDE1}-\eqref{eq:PDE2} encompass a large range of nonlinear elliptic equations.  For example, as long as the PDE operator is a convex function of the Hessian matrix, it can be expressed in the form of~\eqref{eq:PDE1}, though the precise details of this representation may be non-trivial~\cite[Proposition~5.3]{Evans_NonlinearElliptic}.

The key idea is to select stencils that align as closely as possible with the relevant direction $e_\theta = (\cos\theta,\sin\theta)$.  This can be accomplished by relying on a suitable search neighbourhood, which must be large relative to the spatial resolution of the point cloud.  The resulting schemes are not consistent in the traditional sense---in particular, they are not exact on quadratic functions.  However, the truncation error does vanish as the point cloud is refined, and the schemes are monotone.

We describe conditions on the point cloud that ensure the existence of an appropriate meshfree finite difference approximation.  Following the work of Barles and Souganidis~\cite{BSnum}, we prove that our methods converge to the viscosity solution of the PDE as long as the equation satisfies a comparison principle. 

Using the framework of filtered methods~\cite{FOFiltered}, these meshfree schemes also open up many new possibilities for designing higher-order, provably convergent numerical methods on general meshes or point clouds.

\subsection{Contents}\label{sec:contents}
In section~\ref{sec:weak}, we review viscosity solutions and a convergence framework for fully nonlinear elliptic equations.  In section~\ref{sec:meshfree}, we describe our new meshfree finite difference approximations and provide convergence proofs.  In section~\ref{sec:compute}, we present several computational examples that demonstrate the power of these new schemes.  In section~\ref{sec:conclusions}, we provide concluding remarks and discuss future work.

\section{Weak Solutions}\label{sec:weak}

One of the challenges associated with the approximation of fully nonlinear PDEs is the fact that classical (smooth) solutions may not exist.  It thus becomes necessary to interpret PDEs using some notion of weak solution, and the numerical methods that are used need to respect this notion of weak solution.  The most common concept of weak solution for this class of PDEs is the \emph{viscosity solution}, which involves transferring derivatives onto smooth test functions via a maximum principle argument~\cite{CIL}.

\subsection{Viscosity solutions}\label{sec:visc}

The PDEs we consider in this work belong to the class of degenerate elliptic equations,
\bq\label{eq:PDE} F(x,u,D^2u(x)) = 0, \quad x\in\Omega\subset\R^2.\eq
\begin{definition}[Degenerate elliptic]\label{def:elliptic}
The operator
$F:\Omega\times\R\times\Sf^2\to\R$
is \emph{degenerate elliptic} if 
\[ F(x,u,X) \leq F(x,v,Y) \]
whenever $u \leq v$ and $X \geq Y$.
\end{definition}

\begin{remark}
The PDE operators~\eqref{eq:PDE1},\eqref{eq:PDE2} that we consider in this work are degenerate elliptic if they are non-decreasing functions of their second argument ($u$) and non-increasing functions of all subsequent arguments (which involve second directional derivatives).
\end{remark}

Since degenerate elliptic equations need not have classical solutions, solutions need to be interpreted in a weak sense.  The numerical methods developed in this article are guided by the very powerful concept of the viscosity solution~\cite{CIL}.  Checking the definition of the viscosity solution requires checking the value of the PDE operator for smooth test functions lying above or below the semi-continuous envelopes of the candidate solution.

\begin{definition}[Upper and Lower Semi-Continuous Envelopes]\label{def:envelope}
The \emph{upper and lower semi-continuous envelopes} of a function $u(x)$ are defined, respectively, by
\[ u^*(x) = \limsup_{y\to x}u(y), \]
\[ u_*(x) = \liminf_{y\to x}u(y). \]
\end{definition}

\begin{definition}[Viscosity subsolution (supersolution)]\label{def:subsuper}
An upper (lower) semi-continuous function $u$ is a \emph{viscosity subsolution (supersolution)} of~\eqref{eq:PDE} if for every $\phi\in C^2(\bar{\Omega})$, whenever $u-\phi$ has a local maximum (minimum)  at $x \in \bar{\Omega}$, then
\[ 
F_*^{(*)}(x,u(x),D^2\phi(x)) \leq (\geq)  0 .
\]
\end{definition}
\begin{definition}[Viscosity solution]\label{def:viscosity}
A function $u$ is a \emph{viscosity solution} of~\eqref{eq:PDE} if $u^*$ is a subsolution and $u_*$ a supersolution.
\end{definition}

\begin{remark}
This definition also accounts for Dirichlet boundary conditions if the PDE operator is extended to the boundary as
\[ F(x,u(x),D^2\phi(x)) = u(x)-g(x), \quad x\in\partial\Omega. \]
This provides a weak interpretation of the boundary conditions, which can also allow for viscosity solutions that are discontinuous at the boundary.
\end{remark}

An important property of many elliptic equations is the comparison principle, which immediately implies uniqueness of the solution.
\begin{definition}[Comparison principle]\label{def:comparison}
A PDE has a \emph{comparison principle} if whenever $u$ is an upper semi-continuous subsolution and $v$ a lower semi-continuous supersolution of the equation, then $u \leq v$ on $\bar{\Omega}$.
\end{definition}

Proving this form of the comparison principle is highly non-trivial, with very few results available for general degenerate elliptic equations.  Instead, this is typically done on a case-by-case basis with techniques adapted to the PDE in question.  In some cases, where viscosity solutions are discontinuous, the comparison result must be relaxed so that $u \leq v$ only in the interior of the domain $\Omega$.  In that case, Theorem~\ref{thm:converge} is modified to guarantee convergence only at points in the interior of the domain.  Full details of these comparison results go well beyond the scope of the present article.  We refer to~\cite{FroeseGauss} for an example of a recent result that rigorously establishes interior comparison and convergence for the equation of prescribed Gaussian curvature.

\subsection{Approximation of viscosity solutions}\label{sec:approxVisc}

In order to construct convergent approximations of elliptic operators, we will rely on the framework provided by Barles and Souganidis~\cite{BSnum} and further developed by Oberman~\cite{ObermanDiffSchemes}.

We consider finite difference schemes that have the form
\bq\label{eq:approx} F^\epsilon(x,u(x),u(x)-u(\cdot)) = 0 \eq
where $\epsilon$ is a small parameter.

The convergence framework requires notions of consistency and monotonicity, which we define below.

\begin{definition}[Consistency]\label{def:consistency}
The scheme~\eqref{eq:approx} is \emph{consistent} with the equation~\eqref{eq:PDE} if for any smooth function $\phi$ and $x\in\bar{\Omega}$,
\[ \limsup_{\epsilon\to0^+,y\to x,\xi\to0} F^\epsilon(y,\phi(y)+\xi,\phi(y)-\phi(\cdot)) \leq F^*(x,\phi(x),\nabla\phi(x),D^2\phi(x)), 
\]
\[ \liminf_{\epsilon\to0^+,y\to x,\xi\to0} F^\epsilon(y,\phi(y)+\xi,\phi(y)-\phi(\cdot)) \geq F_*(x,\phi(x),\nabla\phi(x),D^2\phi(x)). \]
\end{definition}

\begin{definition}[Monotonicity]\label{def:monotonicity}
The scheme~\eqref{eq:approx} is monotone if $F^\epsilon$ is a non-decreasing function of its final two arguments.
\end{definition}

Schemes that satisfy these two properties respect the notion of the viscosity solution at the discrete level.  In particular, these schemes preserve the maximum principle and are guaranteed to converge to the solution of the underlying PDE.

\begin{theorem}[Convergence~\cite{ObermanDiffSchemes}]\label{thm:convergeVisc}
Let $u$ be the unique viscosity solution of the PDE~\eqref{eq:PDE}, where $F$ is a degenerate elliptic operator with a comparison principle.  Let the finite difference approximation $F^\epsilon$ be consistent and monotone and let $u^\epsilon$ be any solution of the scheme~\eqref{eq:approx}, with bounds independent of $\epsilon$.  Then $u^\epsilon$ converges uniformly to $u$ as $\epsilon\to0$.
\end{theorem}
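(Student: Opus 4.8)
The plan is to use the half-relaxed limit (Barles--Souganidis) method. Since the family $\{u^\epsilon\}$ is uniformly bounded, the upper and lower relaxed limits
\[
\bar u(x) = \limsup_{\epsilon\to 0,\, y\to x} u^\epsilon(y), \qquad \underline u(x) = \liminf_{\epsilon\to 0,\, y\to x} u^\epsilon(y)
\]
are finite on $\bar\Omega$, with $\bar u$ upper semi-continuous, $\underline u$ lower semi-continuous, and $\underline u \leq \bar u$ everywhere. The goal is to show that $\bar u$ is a viscosity subsolution and $\underline u$ a viscosity supersolution of~\eqref{eq:PDE}; the comparison principle then forces $\bar u \leq \underline u$, hence $\bar u = \underline u =: u$, a continuous function that is simultaneously a sub- and supersolution and is therefore, by the uniqueness implied by the comparison principle, the solution in the statement. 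Uniform convergence of $u^\epsilon$ to $u$ then follows from the equality of the two relaxed limits together with the compactness of $\bar\Omega$.

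To prove that $\bar u$ is a subsolution, I would take $\phi \in C^2(\bar\Omega)$ such that $\bar u - \phi$ has a local maximum at $x_0$; after subtracting a constant and adding a penalisation $|x-x_0|^4$ (which alters neither $\nabla\phi(x_0)$ nor $D^2\phi(x_0)$) we may assume the maximum is strict and $\bar u(x_0) = \phi(x_0)$. The key lemma is the standard perturbation argument: there exist $\epsilon_n \to 0$ and points $x_n \to x_0$ at which $u^{\epsilon_n} - \phi$ attains its maximum over a fixed closed ball around $x_0$, with $u^{\epsilon_n}(x_n) \to \bar u(x_0)$. Writing $\xi_n := u^{\epsilon_n}(x_n) - \phi(x_n) \to 0$, the maximum property gives $u^{\epsilon_n}(x_n) - u^{\epsilon_n}(y) \geq \phi(x_n) - \phi(y)$ for every stencil point $y$ of $x_n$ (these lie in the ball once $n$ is large). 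Monotonicity of $F^\epsilon$ in its last two arguments, together with $u^{\epsilon_n}(x_n) = \phi(x_n)+\xi_n$ and the fact that $u^{\epsilon_n}$ solves the scheme, then yields
\[
0 = F^{\epsilon_n}\big(x_n, u^{\epsilon_n}(x_n),\, u^{\epsilon_n}(x_n) - u^{\epsilon_n}(\cdot)\big) \;\geq\; F^{\epsilon_n}\big(x_n,\, \phi(x_n)+\xi_n,\, \phi(x_n)-\phi(\cdot)\big).
\]
Since each right-hand term is $\leq 0$ and $(\epsilon_n, x_n, \xi_n) \to (0, x_0, 0)$, the $\liminf$ half of the consistency definition gives $F_*(x_0, \phi(x_0), \dots, D^2\phi(x_0)) \leq 0$, which is the subsolution inequality. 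The argument for $\underline u$ is symmetric: local minima, the reversed monotonicity chain giving $F^{\epsilon_n}(\cdots) \geq 0$, and the $\limsup$ half of consistency giving $F^*(x_0, \dots, D^2\phi(x_0)) \geq 0$.

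The remaining steps are routine. As $\bar u$ is a USC subsolution and $\underline u$ an LSC supersolution, the comparison principle gives $\bar u \leq \underline u$ on $\bar\Omega$; combined with $\underline u \leq \bar u$ this yields $\bar u = \underline u =: u$, which is then continuous and is the unique viscosity solution. Finally, if $u^\epsilon$ did not converge uniformly there would be $\delta > 0$ and sequences $\epsilon_n \to 0$, $x_n \to x^* \in \bar\Omega$ with $|u^{\epsilon_n}(x_n) - u(x_n)| \geq \delta$; continuity of $u$ and the definitions of the relaxed limits then contradict $\bar u(x^*) = u(x^*) = \underline u(x^*)$.

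I expect the main obstacle to be the perturbation lemma producing the maximiser sequence $x_n \to x_0$: one must argue carefully that maximisers of $u^{\epsilon_n} - \phi$ over a fixed ball converge to $x_0$ and that the corresponding function values converge to $\bar u(x_0)$, using only boundedness of $u^{\epsilon_n}$ (defined on a discrete point set, extended off the cloud in some fixed manner) and the fact that for $n$ large the stencil of $x_n$ lies inside the ball where the comparison with $\phi$ is valid. Everything downstream — the monotonicity manipulation, the limit passage through consistency, and the comparison/compactness arguments — is mechanical. One caveat: if the PDE enjoys only an interior comparison principle, as in the discontinuous-solution setting noted above, the same proof gives $\bar u = \underline u$ only on $\Omega$, hence locally uniform convergence on compact subsets of $\Omega$.
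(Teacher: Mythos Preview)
The paper does not actually supply a proof of this theorem: it is quoted as background, with attribution to~\cite{ObermanDiffSchemes} (and, implicitly, to Barles--Souganidis~\cite{BSnum}), and is invoked later only through the one-line appeal in the proof of Theorem~\ref{thm:converge}. So there is no in-paper argument to compare against.

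That said, what you have written is exactly the standard half-relaxed-limit proof underlying the cited result, and it is correct in outline and in its key steps: defining $\bar u,\underline u$, producing maximisers $x_n\to x_0$ with $\xi_n\to 0$, using monotonicity to pass from $u^{\epsilon_n}$ to $\phi$ in the scheme, invoking the appropriate half of Definition~\ref{def:consistency} to obtain the sub/supersolution inequality, and closing with the comparison principle and a compactness argument for uniform convergence. Your caveat about interior-only comparison is also apt and matches the paper's own remark following Definition~\ref{def:comparison}. The only genuine technical point you flag---constructing the maximiser sequence when $u^{\epsilon_n}$ lives on a discrete point cloud---is handled in the literature by working with the half-relaxed limits directly (no extension is needed: one takes $x_n\in\G^{\epsilon_n}$ realising the maximum of $u^{\epsilon_n}-\phi$ over $\G^{\epsilon_n}\cap\bar B$ and argues by contradiction that any cluster point of $(x_n)$ must be $x_0$), and this goes through without difficulty.
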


We remark that the above theorem assumes existence of a bounded solution to the approximation scheme.  This is typically straightforward to show for a consistent, monotone approximation of a well-posed PDE, though the precise details can vary slightly and rely on available well-posedness theory for the PDE in question.  When the scheme is strictly monotone (proper), stability follows immediately from a discrete comparison principle as in~\cite[Theorem~8]{ObermanDiffSchemes}.  For more complicated equations, the result can be established by constructing smooth sub- and super-solutions of the PDE, which are also sub- and super-solutions of the approximation scheme due to consistency.  Application of the comparison principle then leads to existence of a bounded solution~\cite[Lemmas~35-36]{FroeseGauss}.

\subsection{Wide stencil schemes}\label{sec:WS}

In order to construct a convergent approximation of the PDE~\eqref{eq:PDE}, it is sufficient to design consistent and monotone approximation schemes for second directional derivatives of the form
\[ F_\theta^\epsilon(x,u(x),u(x)-u(\cdot)) \approx - \frac{\partial^2 u}{\partial e_\theta^2}. \]
These can then be substituted directly into the PDE operator $F$, which by assumption is a monotone function of these derivatives.

However, constructing monotone approximations of these operators is not straightforward.  In fact, results by Motzkin and Wasow~\cite{MotzkinWasow} and Kocan~\cite{Kocan} demonstrate that there are elliptic operators for which \emph{no} bounded finite difference stencil will enable the construction of a consistent, monotone approximation.

Oberman~\cite{ObermanWS} addressed this issue by introducing the notion of \emph{wide stencil} finite difference schemes.  These schemes use centred difference approximations of the form
\bq\label{eq:WS} 
 \frac{\partial^2 u}{\partial e_\theta^2} = \frac{u(x+h_\theta e_\theta)+u(x-h_\theta e_\theta)-2u(x)}{h_\theta^2} + \bO(h_\theta^2)
\eq
for directions $e_\theta$ that align with the grid.  That is, there should exists some $h_\theta$ such that $h_\theta e_\theta=(m,n)$ where $m,n\in\Z$.  These finite difference approximations cannot simply rely on nearest neighbours; instead, they require wide stencils.  As stencils are allowed to grow wider, more directions can be accommodated.  

A fixed stencil width will only permit the discretisation of second derivatives in finitely many directions.  If these approximations are used for general elliptic operators of the form~\eqref{eq:PDE}, they will introduce additional discretisation error of the form $d\theta$, which corresponds to the size of angles that can be resolved on the stencil.  See Figure~\ref{fig:WS}.

\begin{figure}
\centering
\includegraphics[width=0.55\textwidth]{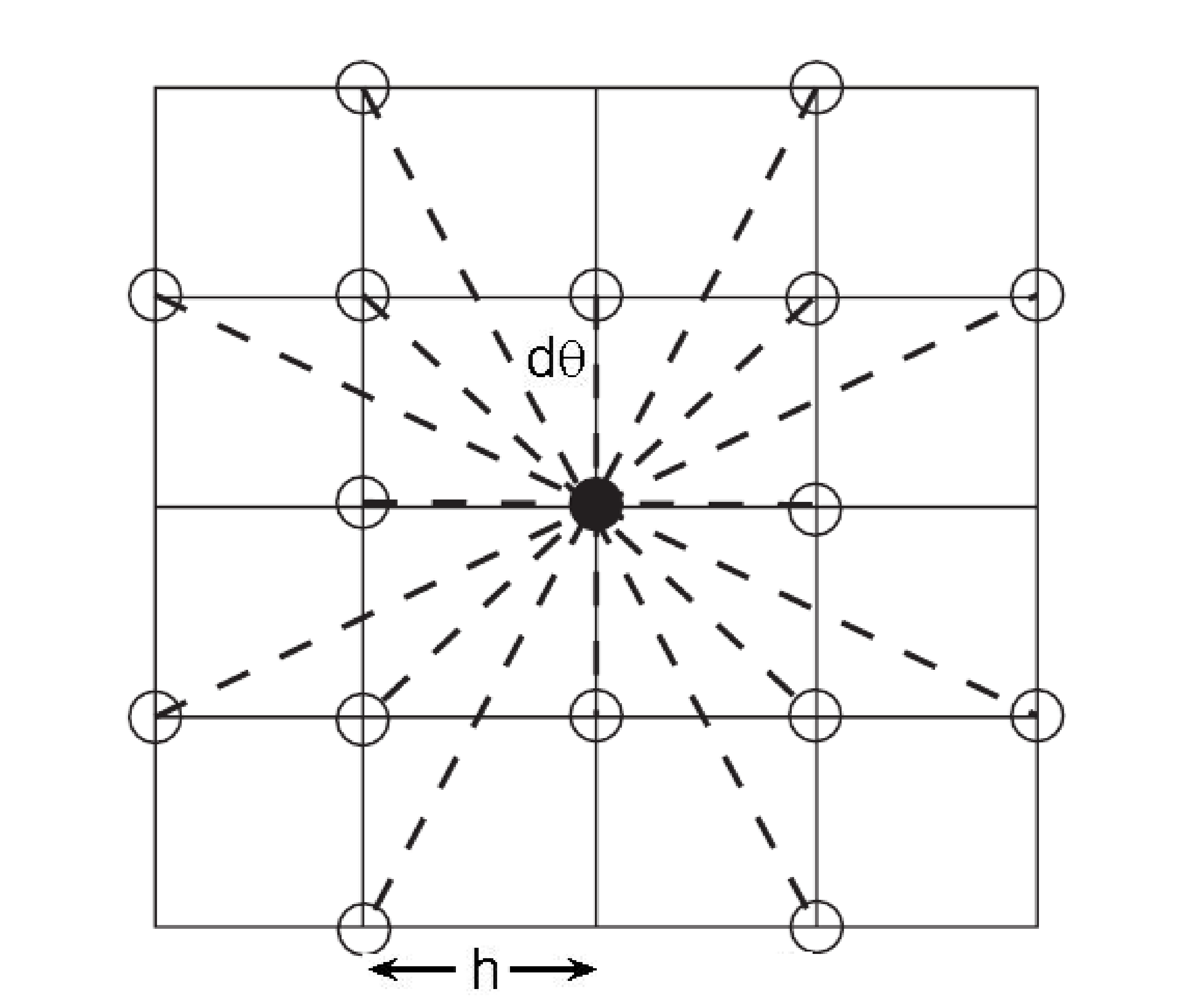}
\caption{A wide finite difference stencil.}
\label{fig:WS}
\end{figure}

While provably convergent wide stencil finite approximations can be constructed for nonlinear elliptic equations of the form~\eqref{eq:PDE}, they suffer from several limitations.  One restriction is that these approximations are defined only on uniform Cartesian grids, and do not extend naturally to non-uniform grids or non-rectangular domains.  A second problem with wide stencil schemes is the challenge of dealing with discretisation points near the boundary, where it is not possible to construct a wide stencil lying inside the domain.  One option is to use an (inconsistent) narrower stencil near the boundary and accept the resulting boundary layer in the computed solution.   In some cases, it is possible to use an altered scheme near the boundary, which is typically complicated and highly dependent on the particular form of the boundary conditions.

\section{Meshfree Finite Difference Approximations}\label{sec:meshfree}

In this section, we introduce a framework for constructing monotone approximations of second directional derivatives on general point clouds.  These approximations apply easily to complicated geometries and non-uniform distribution of discretisation points.  We describe the approximations, and also provide conditions on the point clouds that guarantee a convergent numerical method.

We focus the discussion on monotone approximation of second directional derivatives; these can then be used to approximate other nonlinear operators as described in subsection~\ref{sec:convergence}.

\subsection{Notation}\label{sec:notation}

We introduce the following notation.
\begin{itemize}
\item $\Omega\subset\R^2$ is a bounded domain with Lipschitz boundary $\partial\Omega$. 
\item $\G\subset\bar{\Omega}$ is a point cloud consisting of the points $x_i$, $i=1,\ldots,N$.
\item $h = \sup\limits_{x\in{\Omega}}\min\limits_{y\in\G}\abs{x-y}$ is the spatial resolution of the point cloud.  In particular, every ball of radius $h$ contained in $\bar{\Omega}$ contains at least one discretisation point $x_i$.
\item $h_B = \sup\limits_{x\in{\partial\Omega}}\min\limits_{y\in\G\cap\partial\Omega}\abs{x-y}$ is the resolution of the point cloud on the boundary.  In particular, every ball of radius $h_B$ centred at a boundary point $x\in\partial\Omega$ contains at least one discretisation point $x_i \in \G\cap\partial\Omega$ on the boundary.
\item $\delta = \min\limits_{x\in\Omega\cap\G}\inf\limits_{y\in\partial\Omega}\abs{x-y}$ is the distance between the set of interior discretisation points and the set of boundary discretisation points.  In particular, if $x_i\in\G\cap\Omega$ and $x_j\in\G\cap\partial\Omega$, then the distance between $x_i$ and $x_j$ is at least $\delta$.
\item $d\theta$ is the desired angular resolution of the meshfree finite difference approximation.
\item $r\equiv h(1+\sin(d\theta/2)+\cos(d\theta/2)\cot(d\theta/2))$ is the search radius associated with the point cloud.
\end{itemize}

\subsection{Approximation scheme}\label{sec:approximation}

The idea of meshfree finite difference methods is that at each node $x_i\in\G$ in the point cloud, we examine all other nodes within the search neighbourhood $B(x_i,r)\cap\G$.  An appropriate subset $\{x_j; j\in\Nf(i)\}$ of these points is then selected to form the local finite difference stencil.  Finally, these are used to construct an approximation of the form
\[ \frac{\partial^2u}{\partial e_\theta^2} \approx \tilde{F}_\theta(x_i,u_i,u_i-u_j; j\in\Nf(i)). \]

Meshfree methods have previously been used to approximate several PDE operators~\cite{Belytschko,Demkowicz,DuarteOden,Iliev,LaiZhao,LiszkaFDM,Liszka,Seibold_minimal}.  However, the approaches contained in these works do not apply to the construction of monotone approximations of fully nonlinear or degenerate operators.  In fact, from the results of~\cite{Kocan,MotzkinWasow}, we expect that in general no finite search neighbourhood will be sufficient for the construction of an approximation that is both monotone and consistent (in the sense that the formal discretisation error goes to zero as $h\to0$).

We propose monotone approximations that are not consistent in the traditional sense; in particular, they will not give exact results on quadratic functions.  Instead, we will accept an additional source of discretisation error $d\theta$ relating to how well the stencil is aligned with the direction $e_\theta$. To build a convergent method in this framework, we will allow the search radius $r$ to depend on the spatial resolution $h$, with the total number of points in the search neighbourhood approaching infinity as the point cloud is refined.

Consider any interior point $x_0\in\G\cap\Omega$.  Each point $y_j,\,j=1,\ldots,N$ in the search neighbourhood $B(x_0,r)\cap\G$  can be expressed in polar coordinates $(h_j,\theta_j)$ in terms of the rotated coordinate frame defined by the vectors $x_0+e_\theta$ and $x_0+e_{\theta+\pi/2}$:
\bq\label{eq:polar}
y_j = \begin{cases}
(h_j,d\theta_j), & 0 \leq d\theta_j < \pi/2\\
(h_j,\pi-d\theta_j), & \pi/2 \leq \pi-d\theta_j < \pi\\
(h_j,\pi+d\theta_j), & \pi \leq \pi+d\theta_j < 3\pi/2\\
(h_j,2\pi-d\theta_j), & 3\pi/2 \leq 2\pi-d\theta_j < 2\pi.
\end{cases}
\eq
Note that $d\theta_j$ measures the angular distance between the point $x_j$ and the given direction vector $x_0+e_\theta$; in each case, $d\theta_j\in[0,\pi/2]$.  In particular, we assume that if the search radius is large enough, each $d\theta_j$ will be less than some pre-specified angular resolution $d\theta$.  Conditions needed to ensure the existence of these discretisation points will be established in subsection~\ref{sec:existence}.

In each quadrant, we select the point that aligns most closely with the given direction vector $x_0 + e_\theta$,
\bq\label{eq:stencil} x_i = \argmin\limits_{y_j\in B(x_0,r)\cap\G}\left\{d\theta_j\mid (i-1)\pi/2 \leq \theta_j < i\pi/2\right\}, \quad i = 1, \ldots, 4. \eq
The existence of these points is established in subsection~\ref{sec:existence}.
If more than one value $y_j$ yields the same angular distance, we select the value with the smallest radial coordinate $h_j$.  See Figure~\ref{fig:stencil} for an illustration of the resulting finite difference stencil.

\begin{figure}[htp]
\centering
\subfigure[]{
\includegraphics[width=0.52\textwidth]{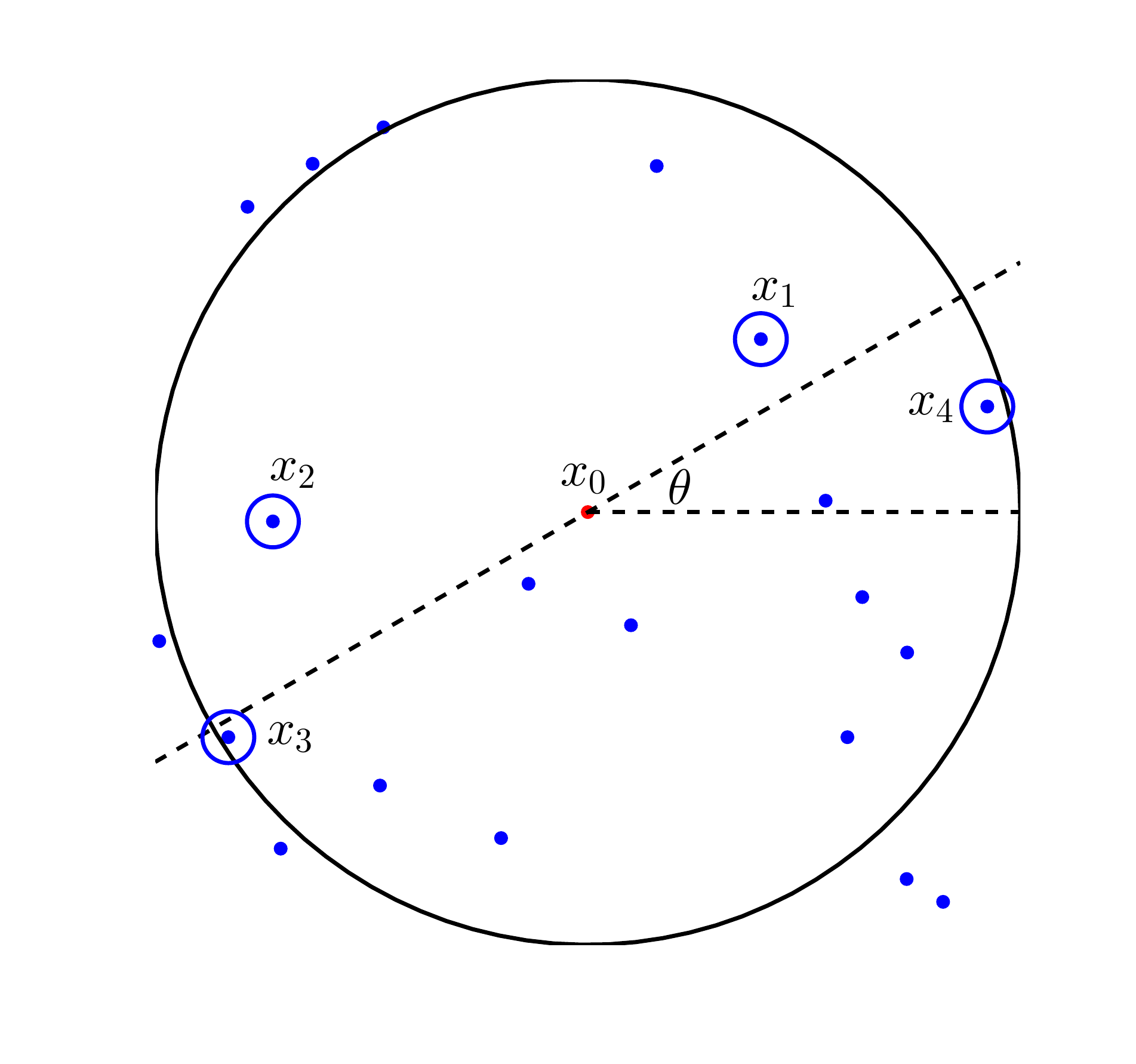}\label{fig:stencil1}}
\subfigure[]{
\includegraphics[width=0.65\textwidth]{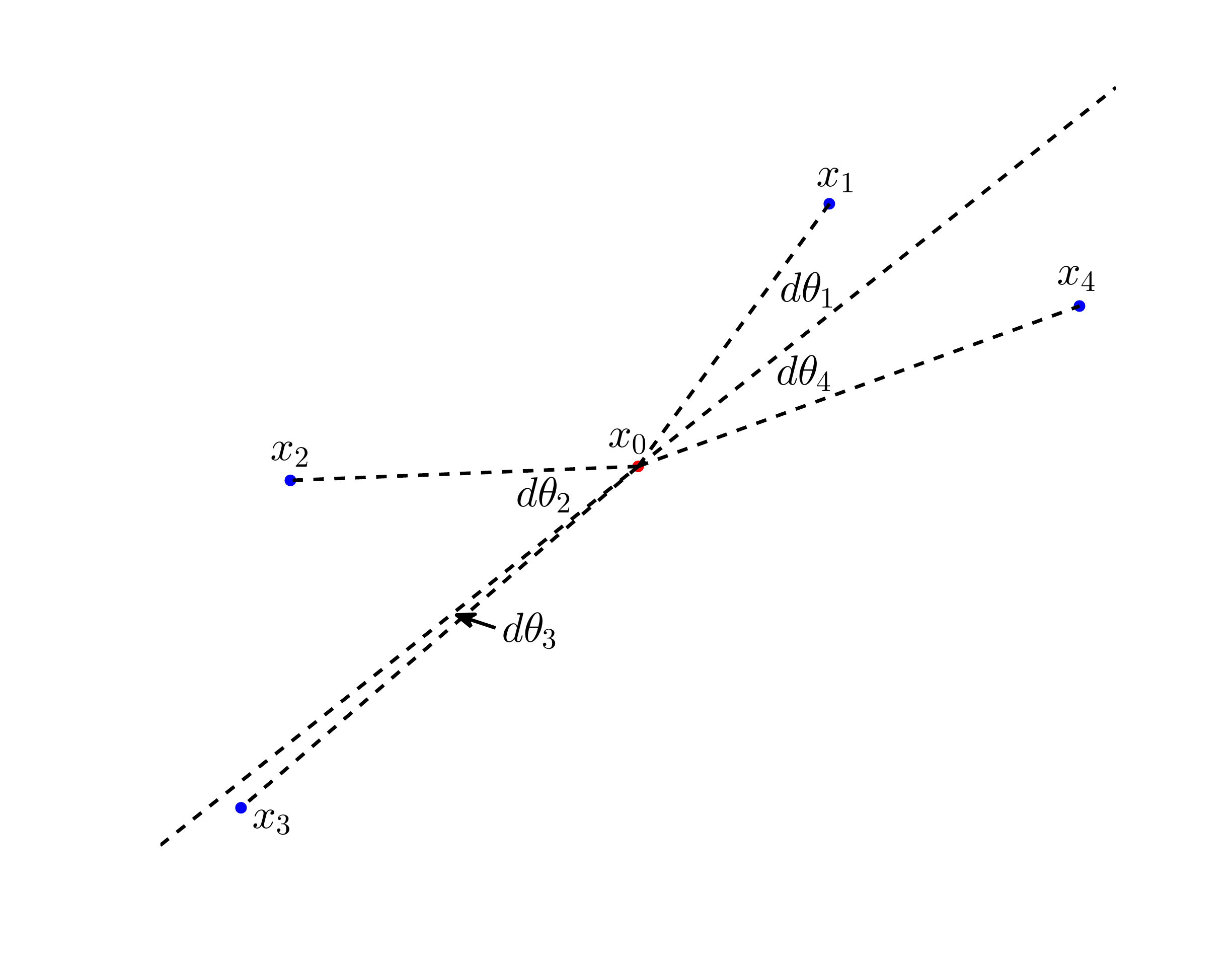}\label{fig:stencil2}}
\caption{A finite difference stencil chosen from a point cloud.}
\label{fig:stencil}
\end{figure}

Next, we seek to approximate the PDE using this stencil.  We look for an approximation of the form
\begin{align*}
\frac{\partial^2u}{\partial e_\theta^2} &\approx \sum\limits_{i=1}^4 a_i(u(x_i)-u(x_0))\\
  &= \sum\limits_{i=1}^4 a_i\left[h_i\cos\theta_iu_{\theta}(x_0) + h_i\sin\theta_iu_{{\theta+\pi/2}}(x_0)+\frac{1}{2}h_i^2\cos^2\theta_iu_{\theta\theta}(x_0)\right.\\&\phantom{=}\left.+\bO(h_i^3+h_i^2\sin d\theta_i)\right].
\end{align*}

Consistency and (negative) monotonicity require at a minimum
\bq\label{eq:conditions}
\begin{cases}
\sum\limits_{i=1}^4 a_ih_i\cos\theta_i = 0\\
\sum\limits_{i=1}^4 a_ih_i\sin\theta_i = 0\\
\sum\limits_{i=1}^4 \frac{1}{2}a_ih_i^2\cos^2\theta_i = 1\\
a_i \geq 0.
\end{cases}
\eq


This type of approximation is unusual in that it does not take into account the other second derivatives $u_{{\theta+\pi/2},{\theta+\pi/2}}$ and $u_{\theta ,{\theta+\pi/2}}$.  Because of this, the resulting approximation scheme need not be exact on quadratic functions.  However, as long as the values of $d\theta_i$ are small (i.e. the points are well aligned with the direction~$e_\theta$), the contribution from these second derivatives is also expected to be small.

Ignoring the condition $a_i \geq 0$, the consistency conditions~\eqref{eq:conditions} lead to a system of three linear equations in four unknowns.  Existence of a positive solutions is guaranteed, as we demonstrate below, and in general we can expect infinitely many positive solutions.  One way to select a particular solution is to augment the system with an additional symmetry condition.  A natural choice is
\bq\label{eq:symmetry}
a_1h_1\sin\theta_1 + a_4h_4\sin\theta_4 = 0.
\eq
Among other things, this ensures that if one of the neighbours (say $x_1$) exactly aligns with the $e_\theta$ direction so that $\sin\theta_1 = 0$, the non-aligned neighbour $x_4$ will receive no weight in the approximation scheme.  We also observe that this condition combined with~\eqref{eq:conditions} ensures a similar condition for the neighbours $x_2, x_3$ that approximately align with the $-e_\theta$ direction:
\[ a_2h_2\sin\theta_2 + a_3h_3\sin\theta_3 = 0. \]

We can now explicitly solve the linear system~\eqref{eq:conditions}-\eqref{eq:symmetry}.  To make the description more compact, we introduce the notation
\bq\label{eq:trig} C_i = h_i\cos\theta_i = \bO(h_i), \quad S_i = h_i\sin\theta_i = \bO(h_i d\theta_i).\eq
Then a solution of~\eqref{eq:conditions}-\eqref{eq:symmetry} is 
\bq\label{eq:coeffs}
\begin{split}
a_1 &= \frac{2S_4(C_3S_2-C_2S_3)}{(C_3S_2-C_2S_3)(C_1^2S_4-C_4^2S_1)-(C_1S_4-C_4S_1)(C_3^2S_2-C_2^2S_3)}\\
a_2 &= \frac{2S_3(C_1S_4-C_4S_1)}{(C_3S_2-C_2S_3)(C_1^2S_4-C_4^2S_1)-(C_1S_4-C_4S_1)(C_3^2S_2-C_2^2S_3)}\\
a_3 &= \frac{-2S_2(C_1S_4-C_4S_1)}{(C_3S_2-C_2S_3)(C_1^2S_4-C_4^2S_1)-(C_1S_4-C_4S_1)(C_3^2S_2-C_2^2S_3)}\\
a_4 &= \frac{-2S_1(C_3S_2-C_2S_3)}{(C_3S_2-C_2S_3)(C_1^2S_4-C_4^2S_1)-(C_1S_4-C_4S_1)(C_3^2S_2-C_2^2S_3)}.
\end{split}
\eq

We note that because $x_i$ lies in the $ith$ quadrant, we have
\[ C_1, C_4, S_1, S_2 \geq 0, \quad C_2, C_3, S_3, S_4 \leq 0. \]
This ensures that the coefficients~\eqref{eq:coeffs} satisfy the positivity condition $a_i \geq 0$.

We can easily verify that each coefficient $a_i$ has a size on the order of at most
\[ a_i = \bO\left(\frac{1}{h_i^2}\right). \]
For example,
\begin{align*}
{a_1} &= \frac{2S_4(C_3S_2-C_2S_3)}{(C_3S_2-C_2S_3)(C_1^2S_4-C_4^2S_1)-(C_1S_4-C_4S_1)(C_3^2S_2-C_2^2S_3)}\\
  &\leq \frac{2S_4(C_3S_2-C_2S_3)}{(C_3S_2-C_2S_3)(C_1^2S_4-C_4^2S_1)}= \frac{2S_4}{C_1^2S_4-C_4^2S_1}\\
	&\leq \frac{2S_4}{C_1^2S_4}=\bO\left(\frac{1}{h_1^2}\right).
\end{align*}

By construction, the overall spatial and angular resolution of the scheme satisfy
\[
\max\{h_i\} \leq r, \quad \max\{d\theta_i\} \leq d\theta.
\]
Thus the resulting (negative) monotone approximation scheme has the form
\bq\label{eq:fd}
\Dt_{\theta\theta}u(x_0) \equiv \sum\limits_{i=1}^4{a_i(u(x_i)-u(x_0))} =\frac{\partial^2u(x_0)}{\partial\theta^2}+ \bO(r + d\theta).
\eq

We remark that in the special case of a Cartesian grid and a direction $e_\theta$ that aligns with the grid, the approximation resulting from the coefficients~\eqref{eq:coeffs} reduces to the usual centred difference discretisation.

\subsection{Existence of consistent, monotone scheme}\label{sec:existence}
Next, we establish conditions on the point cloud that will ensure the existence of a monotone and consistent scheme.

In order to construct the finite difference approximation, it was necessary to posit the existence of a node $x_i$ in each quadrant such that $0 \leq d\theta_i \leq d\theta < \pi/2$ and $\abs{x_i-x_0} \leq r$.  In this section, we describe conditions on the point cloud $\G$ and the associated search radius $r$ that guarantee this is true.

We consider two different scenarios: points that are a distance of at least $r$ from the boundary $\partial\Omega$ and points that are within a distance $r$ of the boundary.

First we consider points sufficiently far from the boundary. 
\begin{lemma}[Existence of Stencil (Interior)]\label{lem:existInt}
Choose any $x_0\in\Omega$ such that $\text{dist}(x,\partial\Omega) \geq r$.  Then the four discretisation points $x_i\in\G$ defined by~\eqref{eq:stencil} exist.
\end{lemma}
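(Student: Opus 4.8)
The plan is to show that for each quadrant $i=1,\dots,4$ in the rotated frame, the search neighbourhood $B(x_0,r)\cap\G$ contains at least one point whose angular coordinate $\theta_j$ lies in the sector $[(i-1)\pi/2,i\pi/2)$ and satisfies $d\theta_j\le d\theta$; once this is known, the $\argmin$ in~\eqref{eq:stencil} is taken over a nonempty finite set and hence is attained. By the four-fold symmetry of the construction it suffices to treat one quadrant, say $i=1$, i.e. to produce a point $x_j\in\G$ with $\abs{x_j-x_0}\le r$ lying in the angular wedge of half-opening $d\theta$ about the ray $x_0+\mathbb{R}_{\ge0}e_\theta$.

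The key step is a covering argument using the defining property of the spatial resolution $h$: every ball of radius $h$ that is contained in $\bar\Omega$ contains a discretisation point. So I would exhibit a ball $B(c,h)\subset\bar\Omega$ that itself lies inside the wedge $\{y : 0\le d\theta(y-x_0)\le d\theta\}\cap B(x_0,r)$. Place the centre $c$ on the ray $x_0+te_\theta$ at a suitable distance $t$ from $x_0$. For $B(c,h)$ to fit inside the wedge we need the distance from $c$ to each bounding line of the wedge to be at least $h$; since that distance is $t\sin(d\theta)$ for a wedge of half-angle $d\theta$ — actually, to be safe I would use the slightly sharper geometric bound governed by $\sin(d\theta/2)$ corresponding to splitting the quadrant — this forces $t\ge h/\sin(d\theta/2)$ (up to the exact constant matching the paper's definition of $r$). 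For $B(c,h)$ to fit inside $B(x_0,r)$ we need $t+h\le r$. Combining these, the choice $t = h\bigl(1+\cot(d\theta/2)\bigr)$-ish, and the resulting requirement $r\ge h\bigl(1+\sin(d\theta/2)+\cos(d\theta/2)\cot(d\theta/2)\bigr)$, is exactly the definition of the search radius $r$ given in subsection~\ref{sec:notation}; so the inequality is satisfied by construction. Since $\dist(x_0,\partial\Omega)\ge r$ and $B(c,h)\subset B(x_0,r)$, we get $B(c,h)\subset\Omega\subset\bar\Omega$, so the resolution hypothesis applies and yields a point $x_j\in\G\cap B(c,h)$. Any such $x_j$ satisfies $\abs{x_j-x_0}\le t+h\le r$ and lies in the wedge, hence has $d\theta_j\le d\theta<\pi/2$ and $\theta_j\in[0,\pi/2)$.

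The main obstacle I anticipate is purely bookkeeping: getting the trigonometry of the inscribed ball exactly right so that the required lower bound on $r$ matches the paper's formula $r=h(1+\sin(d\theta/2)+\cos(d\theta/2)\cot(d\theta/2))$ — in particular, deciding whether the relevant half-angle is $d\theta$ or $d\theta/2$, and being careful about the boundary case where the inscribed ball is tangent to the wedge's bounding rays (closed vs.\ half-open sectors in~\eqref{eq:polar}). A minor additional point is to handle the situation in which $x_0$ is exactly on $\partial\Omega$ or the ball touches $\partial\Omega$; since the hypothesis is $\dist(x_0,\partial\Omega)\ge r$ and $B(c,h)$ is strictly interior to $B(x_0,r)$ (as $t+h<r$ whenever the inequality defining $r$ is strict, which it is for $d\theta\in(0,\pi/2)$), the closed ball $\overline{B(c,h)}$ still lies in $\bar\Omega$ and the resolution property delivers the needed point. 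Once the point exists in each of the four sectors, the selection~\eqref{eq:stencil} is well defined, and the tie-breaking rule (smallest $h_j$) makes the choice unique, completing the proof.
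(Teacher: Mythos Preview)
Your approach is essentially the same as the paper's: inscribe a closed ball of radius $h$ inside the wedge $\{x_0+te_\phi : \phi\in[\theta,\theta+d\theta],\ t\in(0,r]\}$, observe it lies in $\bar\Omega$ because $\dist(x_0,\partial\Omega)\ge r$, and invoke the resolution property of $\G$. The one correction is that the centre should sit on the bisector, $c=x_0+(r-h)\,e_{\theta+d\theta/2}$, not on the ray $x_0+te_\theta$ (which is an \emph{edge} of the one-sided wedge, so a ball centred there would spill into the fourth quadrant); with that placement the inscribed-ball trigonometry you flag as the main obstacle reduces exactly to the defining formula for $r$.
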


\begin{proof}
We demonstrate the existence of a suitable node $x_1$ in the first quadrant; the other cases are analogous.

Our goal is to show that the set 
\bq\label{eq:wedge}
\G\cap\left\{x_0+te_\phi\mid\phi\in[\theta,\theta+d\theta], t\in(0,r]\right\}
\eq
is non-empty.  
Consider the closed ball $\bar{B}(x_0+(r-h)e_{\theta+d\theta/2},h)$, recalling that
\[ r = h(1+\sin(d\theta/2)+\cos(d\theta/2)\cot(d\theta/2)). \] 
Using elementary geometric arguments, we see that this small ball is contained within the above wedge. See Figure~\ref{fig:existInt}.  

From the definition of the spatial resolution $h$, any ball of radius~$h$ must contain a node $x_1\in\G$.  Since this small ball is contained within the wedge~\eqref{eq:wedge}, we have successfully identified a node~$x_1$ within the wedge.  Thus the wedge contains an appropriate discretisation node and the monotone stencil exists.
\qed\end{proof}
\begin{figure}
\centering
\includegraphics[width=\textwidth]{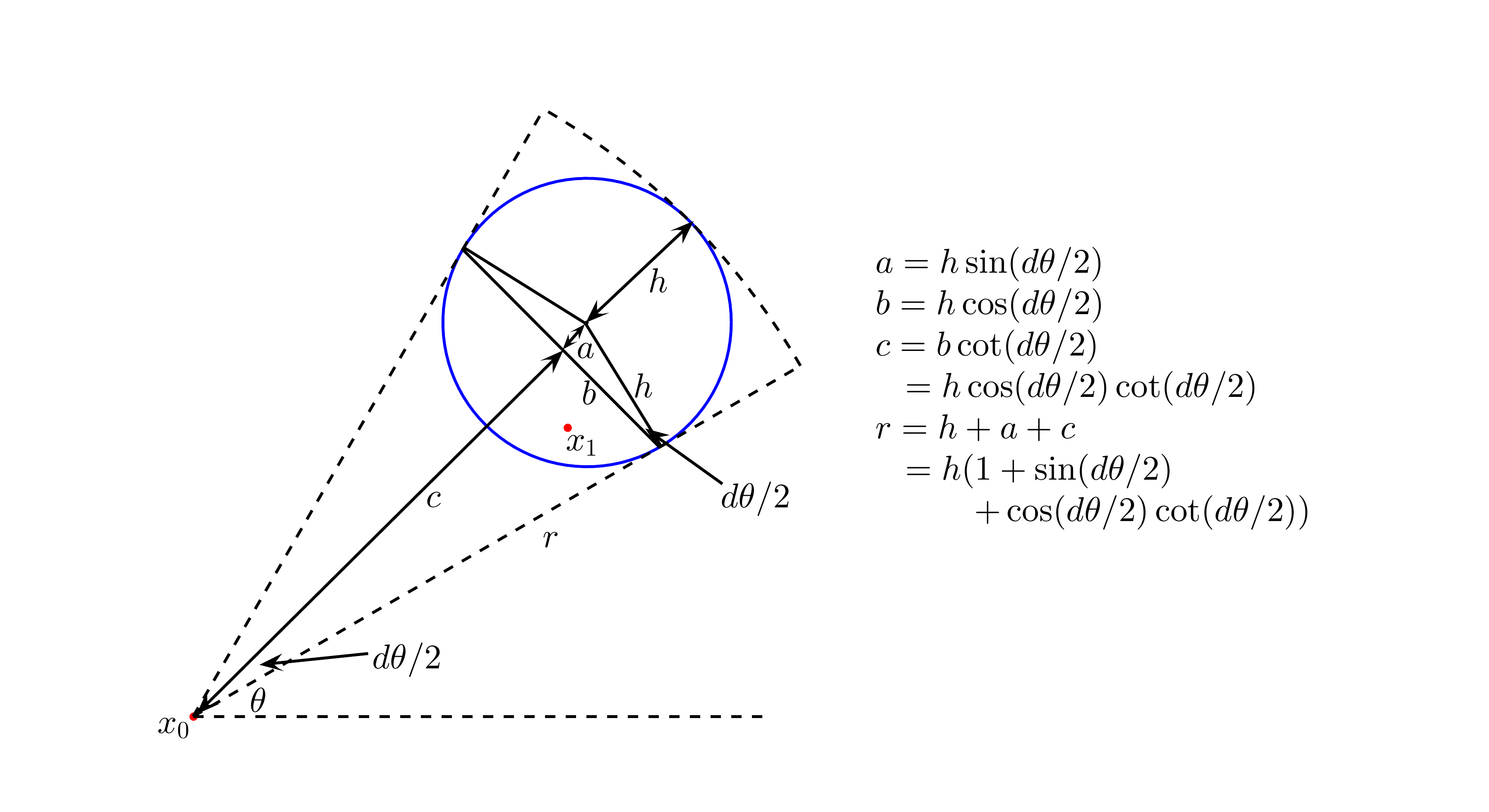}
\caption{A node $x_1$ exists within the given search neighbourhood.}
\label{fig:existInt}
\end{figure}

Secondly, we consider points close to the boundary of the domain.  This is a more delicate calculation since a ball of the usual search radius~$r$ may not be contained in the domain, and the argument used to prove Lemma~\ref{lem:existInt} breaks down.  
Indeed, for certain degenerate PDEs, more traditional methods posed on a uniform grid~\cite{Bonnans_HJB,ObermanWS} will necessarily be either inconsistent or non-monotone at points near the boundary where the full stencil width cannot be accessed~\cite{Kocan}.

 In order to ensure the existence of appropriate neighbours close to the boundary, we need to require that the boundary of the domain is more highly resolved than the interior.  This boundary resolution is characterised by the parameter $h_B$, which will typically be less than the overall resolution~$h$.

\begin{lemma}[Existence of Stencil (Boundary)]\label{lem:existBdy}
Choose any $x_0\in\Omega$ such that $\text{dist}(x,\partial\Omega) < r$.
If the boundary resolution of the point cloud $\G$ satisfies $h_B \leq 2\delta\tan(d\theta/2)$ and $d\theta$ is sufficiently small (depending on the regularity of the domain) then the four discretisation points $x_i\in\G$ defined by~\eqref{eq:stencil} exist.
\end{lemma}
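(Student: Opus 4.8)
The plan is to handle the near-boundary case by a two-part construction: for directions that point "into" the domain we can still use an interior-style wedge argument, while for directions that point "out of" the domain we must instead pick up a discretisation node that lies \emph{on} the boundary $\partial\Omega$, which is where the hypothesis $h_B \le 2\delta\tan(d\theta/2)$ enters. Fix an interior node $x_0$ with $\dist(x_0,\partial\Omega) = d_0 < r$, and fix a direction $e_\theta$. For each of the four quadrants we must exhibit a node $x_i \in \G$ with angular deviation $d\theta_i \le d\theta$ and $\abs{x_i - x_0} \le r$. I would first dispose of the quadrants whose bisecting ray $x_0 + t e_{\theta + (\text{quadrant offset})}$ stays a distance at least $h$ inside $\Omega$ for $t$ up to the relevant length: for those, the exact ball-inside-a-wedge argument of Lemma~\ref{lem:existInt} goes through verbatim (the wedge $\{x_0 + te_\phi : \phi\in[\cdot,\cdot+d\theta],\, t\in(0,r]\}$ contains $\bar B(x_0+(r-h)e_{\cdot+d\theta/2},h)$, and that ball meets $\G$ by definition of $h$).

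The substantive case is a quadrant whose wedge is partly cut off by $\partial\Omega$. Here the idea is: shoot the bisecting ray from $x_0$ until it first hits $\partial\Omega$, say at a point $p\in\partial\Omega$ at distance $\rho \ge \delta$ from $x_0$ (using $\dist(x_0,\partial\Omega)\ge\delta$, since $x_0$ is an interior node). By the boundary-resolution hypothesis there is a boundary node $x_i\in\G\cap\partial\Omega$ with $\abs{x_i - p}\le h_B$. I then need two estimates. First, \emph{angular control}: the node $x_i$, seen from $x_0$, deviates from $e_\theta$ by an angle bounded by the angle subtended at $x_0$ by a segment of length $h_B$ at distance $\rho$, which is at most $\arctan(h_B/\rho) \le \arctan(h_B/\delta)$ plus a correction from the wedge half-angle; the hypothesis $h_B \le 2\delta\tan(d\theta/2)$ is exactly calibrated so that $\arctan(h_B/(2\delta)) \le d\theta/2$, giving total deviation $\le d\theta$ once the ray is already inside the wedge. (This is the step where I would draw the picture and bound the triangle with vertices $x_0$, $p$, $x_i$; the factor of $2$ in $2\delta$ absorbs the discrepancy between "deviation measured at $p$" and "deviation measured at $x_0$".) Second, \emph{radial control}: $\abs{x_i - x_0} \le \rho + h_B \le r$, which holds because $\rho \le d_0 + (\text{a little}) < r$ and $h_B$ is small; if $h_B$ is not automatically dominated, one strengthens the search radius hypothesis or notes $h_B \le h \le r$ and $\rho < r$ already forces $\rho + h_B$ within the slightly enlarged radius — this bookkeeping is routine.

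The remaining ingredient is the phrase "$d\theta$ sufficiently small depending on the regularity of the domain." This is needed so that (i) the bisecting ray actually \emph{reaches} $\partial\Omega$ inside the wedge rather than exiting through a re-entrant piece of the Lipschitz boundary in a way that destroys the angular estimate, and (ii) near a boundary point the boundary looks nearly flat at scale $h_B$, so that the nearby boundary node $x_i$ found above genuinely lies within the angular wedge and not on a piece of $\partial\Omega$ that bends away. Concretely I would invoke the Lipschitz (or interior-cone) property of $\partial\Omega$: there is a cone half-angle $\alpha$ such that at every boundary point an interior cone of opening $\alpha$ fits; requiring $d\theta < \alpha$ (or some explicit fraction of it) guarantees at least one quadrant's wedge penetrates the domain all the way to the boundary, and guarantees the flatness needed for the angular bound. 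The main obstacle, and the place I expect to spend the most care, is precisely this interaction between the fixed angular resolution $d\theta$ and the boundary geometry: making the claim "the wedge meets $\partial\Omega$ transversally enough" quantitative in terms of the Lipschitz constant, and then checking that all four quadrants are covered — some by the interior argument, the rest by the boundary-node argument — with no quadrant falling through the cracks. Once that geometric dichotomy is set up cleanly, each individual estimate is elementary trigonometry of the kind already used in Lemma~\ref{lem:existInt}.
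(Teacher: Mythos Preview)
Your overall strategy matches the paper's: split into the case where the wedge lies entirely inside $\Omega$ (handled verbatim by Lemma~\ref{lem:existInt}) and the case where the wedge meets $\partial\Omega$ (handled via the boundary resolution $h_B$). The difference is in how you extract a boundary node in the second case, and your version does not quite deliver the constant $2\delta\tan(d\theta/2)$.

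The paper does \emph{not} shoot the bisecting ray. It sends \emph{both} edge rays $x_0+te_\theta$ and $x_0+te_{\theta+d\theta}$ to their first intersections $y,z\in\partial\Omega$, observes that for $d\theta$ small the entire boundary arc between $y$ and $z$ lies inside $B(x_0,r)$, and then estimates the length of that arc: since every boundary point is at distance at least $\delta$ from $x_0$, the arc spans the full width of the wedge at distance $\ge\delta$, hence has length at least $2\delta\tan(d\theta/2)\ge h_B$. The boundary-resolution hypothesis then places a discretisation node on this arc. Because the arc is already inside the wedge, the angular control $d\theta_i\le d\theta$ is automatic and no separate angular estimate is needed.

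Your bisecting-ray construction loses exactly a factor of two here. If the bisector hits $\partial\Omega$ at $p$ with $\abs{x_0-p}=\rho\ge\delta$, a boundary node $x_i$ with $\abs{x_i-p}\le h_B$ subtends at $x_0$ an angle of order $\arctan(h_B/\rho)\le\arctan(h_B/\delta)$. For $x_i$ to lie in the wedge you need this to be at most $d\theta/2$, i.e.\ $h_B\le\delta\tan(d\theta/2)$, which is \emph{stronger} than the stated hypothesis $h_B\le 2\delta\tan(d\theta/2)$. Your remark that ``the factor of $2$ absorbs the discrepancy between deviation measured at $p$ and at $x_0$'' is not the right explanation; the $2$ in the paper comes from the wedge having \emph{two} half-widths $\delta\tan(d\theta/2)$, one on each side of the bisector, and you only capture one of them by shooting a single ray. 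The fix is simply to follow the paper and use both edge rays, so the node is found \emph{on} the arc and the angular bound comes for free.
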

\begin{proof}
Our goal is to show that the set 
\bq\label{eq:wedgeBound}
\G\cap\bar{\Omega}\cap\left\{x_0+te_\phi\mid\phi\in[\theta,\theta+d\theta], t\in(0,r]\right\}
\eq
is non-empty.  If the wedge is contained entirely within the domain $\Omega$, the proof proceeds as with Lemma~\ref{lem:existInt}.  

Suppose instead, that this wedge intersects $\partial\Omega$.  In particular,  we let $y$, $z$ be the first points of intersection of the rays $x_0+te_\theta$, $x_0+te_{\theta+d\theta}$ with the boundary $\partial\Omega$.  For small enough $d\theta$, the arc of the boundary between $y$ and $z$ lies completely inside the search neighbourhood,
\[ \abs{w-x_0} < r \text{ whenever } w\in\partial\Omega \text{ lies between }y \text{ and }z. \]

By definition, $\text{dist}(x_0,\partial\Omega) \geq \delta$.  Thus the arclength of the boundary contained in the search neighbourhood is at least $2\delta\tan(d\theta/2) \geq h_B$; see Figure~\ref{fig:existBdy}.  By the definition of the boundary resolution, this must contain a discretisation node.
\qed\end{proof}
\begin{figure}
\includegraphics[width=0.75\textwidth]{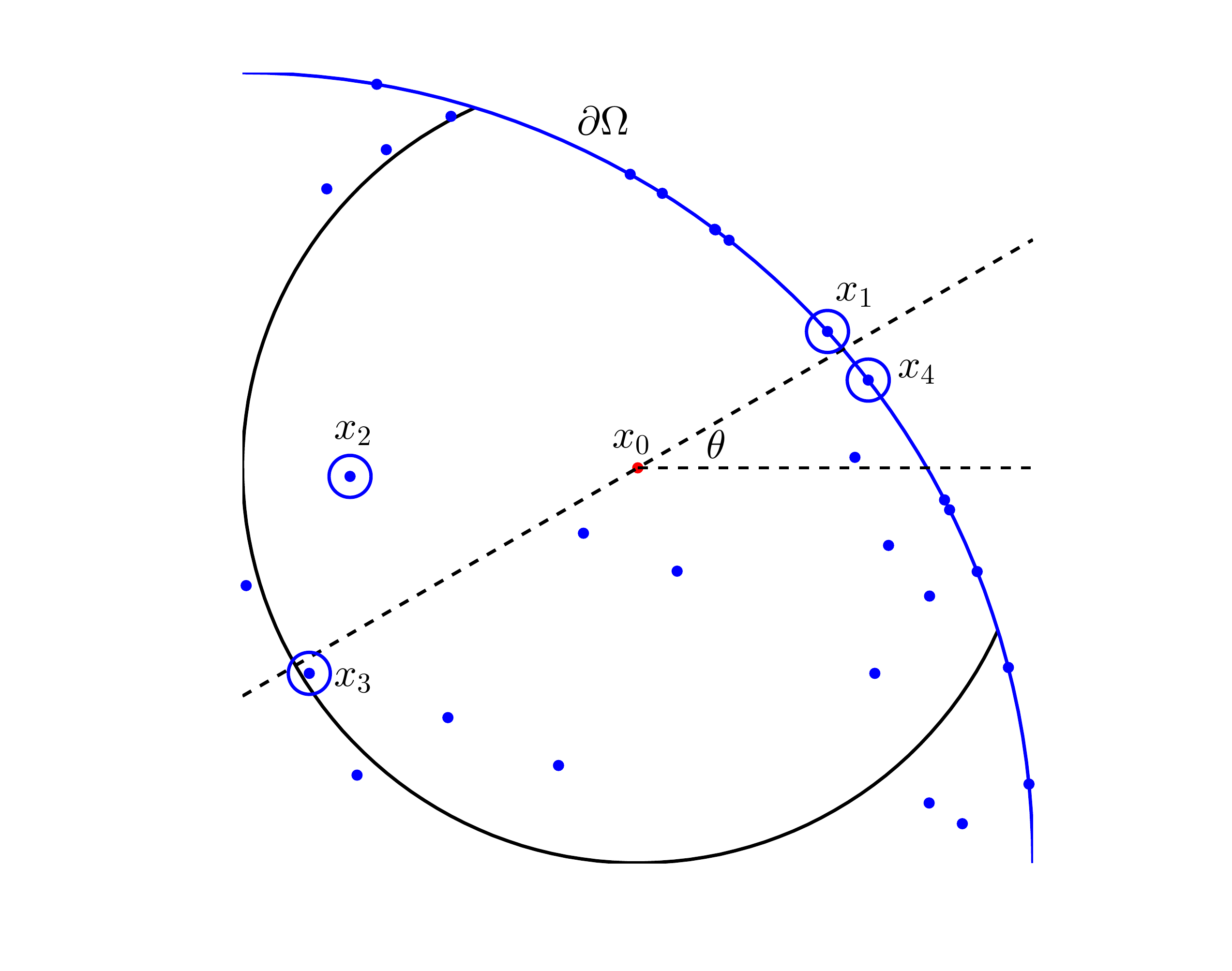}
\includegraphics[width=0.75\textwidth]{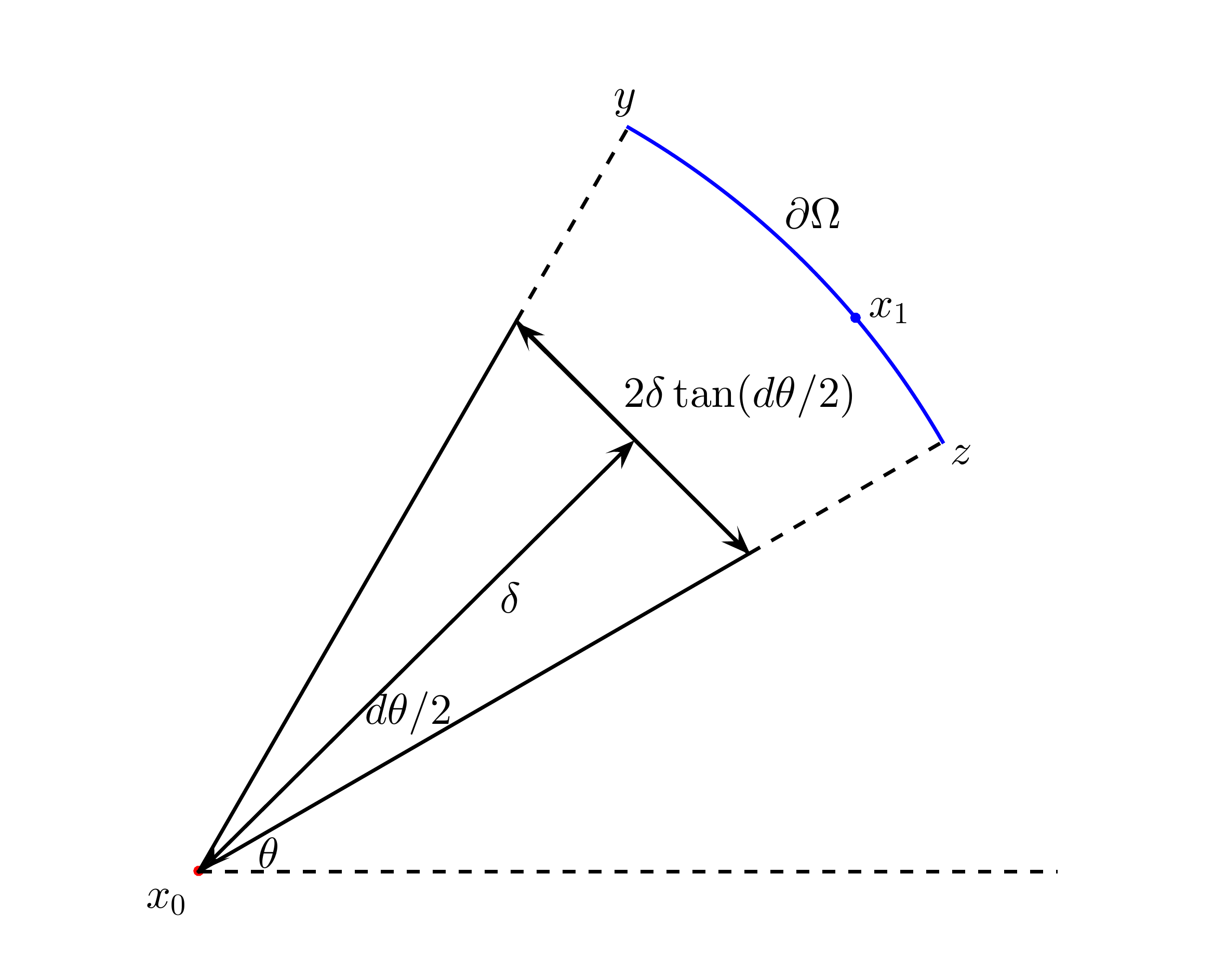}
\caption{A node $x_1$ exists within the given search neighbourhood near the boundary.}
\label{fig:existBdy}
\end{figure}

These two results immediately yield the existence of a monotone discretisation.
\begin{theorem}[Existence of Stencil]\label{thm:exist}
Let $\G$ be a point cloud with boundary resolution $h_B \leq 2\delta\tan(d\theta/2)$ and let $x_0\in\Omega$.  If the angular resolution $d\theta$ is sufficiently small then the four discretisation points $x_i\in\G$ defined by~\eqref{eq:stencil} exist.
\end{theorem}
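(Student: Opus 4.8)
The plan is to observe that Theorem~\ref{thm:exist} is essentially a bookkeeping combination of the two preceding lemmas, so the proof reduces to a short case split on the distance from $x_0$ to the boundary. First I would note that by Lemma~\ref{lem:existInt} and Lemma~\ref{lem:existBdy}, the hypothesis $h_B \leq 2\delta\tan(d\theta/2)$ is precisely what is needed in the boundary case, and no extra hypothesis is needed in the interior case, so the union of the two cases covers every $x_0\in\Omega$.

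The steps, in order, are: (1) Fix $x_0\in\Omega$ and set $\rho = \text{dist}(x_0,\partial\Omega)$. (2) If $\rho \geq r$, apply Lemma~\ref{lem:existInt} directly to conclude that the four points $x_i$ defined by~\eqref{eq:stencil} exist. (3) If $\rho < r$, apply Lemma~\ref{lem:existBdy}; its hypotheses are exactly the standing assumption $h_B \leq 2\delta\tan(d\theta/2)$ together with $d\theta$ sufficiently small (depending on the regularity of $\partial\Omega$), both of which are assumed in the statement of the theorem. (4) In either case the stencil exists, which is the claim. I would also remark that the two hypotheses on $d\theta$ appearing in the two lemmas (small enough that the wedge-inscribed ball lies in the wedge, and small enough that the boundary arc between $y$ and $z$ lies in the search neighbourhood) can be taken simultaneously by choosing $d\theta$ below the minimum of the two thresholds, so "sufficiently small" in the theorem is well-defined.

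There is essentially no main obstacle here, since the real content lives in Lemmas~\ref{lem:existInt} and~\ref{lem:existBdy}; the only thing to be slightly careful about is the boundary-case threshold on $d\theta$, which is not a single explicit number but depends on the local geometry of $\partial\Omega$ (so that the chord subtended by the wedge rays near the boundary behaves like the arclength up to a controlled factor, and so that the relevant boundary arc stays inside $B(x_0,r)$). I would phrase this as: since $\partial\Omega$ is Lipschitz and $\Omega$ is bounded, there is a uniform $d\theta_0 > 0$ such that for all $d\theta \leq d\theta_0$ the geometric claims used in the proof of Lemma~\ref{lem:existBdy} hold at every boundary point, and then the theorem follows for all $d\theta \leq \min\{d\theta_0, \pi/2\}$ satisfying the interior lemma's threshold as well. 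The proof is then three or four lines.

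\begin{proof}
Fix $x_0\in\Omega$ and let $\rho=\text{dist}(x_0,\partial\Omega)$. If $\rho\geq r$, then Lemma~\ref{lem:existInt} applies and the four points $x_i$ defined by~\eqref{eq:stencil} exist. If instead $\rho<r$, then since $\G$ has boundary resolution $h_B\leq 2\delta\tan(d\theta/2)$ and $d\theta$ is sufficiently small (below the geometric threshold in Lemma~\ref{lem:existBdy}, which depends only on the Lipschitz regularity of $\partial\Omega$ and hence can be chosen uniformly in $x_0$), Lemma~\ref{lem:existBdy} applies and the four points $x_i$ again exist. These two cases exhaust all $x_0\in\Omega$, so choosing $d\theta$ below the thresholds of both lemmas yields the claim.
\qed\end{proof}
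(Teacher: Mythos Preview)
Your proposal is correct and follows exactly the paper's approach: the theorem is stated as an immediate consequence of Lemmas~\ref{lem:existInt} and~\ref{lem:existBdy}, obtained by splitting on whether $\text{dist}(x_0,\partial\Omega)\geq r$ or $<r$. Your added remark about choosing $d\theta$ below the minimum of the two thresholds is a reasonable bit of extra care but not strictly needed beyond what the lemmas already assert.
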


It is also important to verify that the scheme is consistent; that is, the discretisation error should go to zero as the point cloud is refined.    We recall that the search radius is given by
\[ r = h(1+\sin(d\theta/2)+\cos(d\theta/2)\cot(d\theta/2)) = \bO\left(\frac{h}{d\theta}\right).\]
Then the overall discretisation error in~\eqref{eq:fd} is
\[ \bO\left(r+d\theta\right) = \bO\left(\frac{h}{d\theta}+d\theta\right). \]
Consistency requires that as $h\to0$, both $h/d\theta\to0$ and $d\theta\to0$.  In particular, an optimal choice is $d\theta = \bO(\sqrt{h})$.  This requires a search radius of size $r = \bO(\sqrt{h})$ and yields a formal discretisation error of $\bO(\sqrt{h})$.

We emphasise also that the boundary needs to be sufficiently well resolved in order to construct these schemes.  For our optimal choice, we suppose that the distance~$\delta$ between any interior node and the boundary is on the order of $\bO(h)$.  Then the spatial resolution of the boundary should be
\[ h_B \leq 2\delta\tan(d\theta/2) = \bO(h^{3/2}). \]
The more traditional alternative to this higher boundary resolution is to use a lower-order or even inconsistent scheme near the boundary, which leads to a computed solution containing a boundary layer~\cite{Bonnans_HJB,ObermanWS}.    

\subsection{Convergent approximation of nonlinear equations}\label{sec:convergence}

Now we demonstrate that we can use meshfree finite differences to construct convergent approximations of fully nonlinear elliptic PDEs of the form~\eqref{eq:PDE1} and~\eqref{eq:PDE2}. 

In the case of~\eqref{eq:PDE1}, which involves second directional derivatives in finitely many directions, we use the approximation
\bq\label{eq:approx1}
\tilde{F}_i[u]  \equiv F(x_i,u_i,\Dt_{\theta\theta}u_i;\theta\in\Af) = 0.
\eq

As in~\cite{ObermanWS}, the eigenvalues of the Hessian in 2D can be characterised (via Rayleigh-Ritz) as the minimal and maximal second directional derivatives,
\[ \lambda_-(D^2u) = \min\limits_{\theta\in[0,2\pi)}\frac{\partial^2u}{\partial e_\theta^2}, \quad  \lambda_+(D^2u) = \max\limits_{\theta\in[0,2\pi)}\frac{\partial^2u}{\partial e_\theta^2}.\]
We approximate these by computing the minima (maxima) over a finite subset
\bq\label{eq:directions}
\tilde{\Af} = \left\{jd\theta\mid j = 0, \ldots, \floor{\frac{2\pi}{d\theta}}\right\},
\eq
which introduces the directional resolution error into a second part of the approximation.  Then we can approximate~\eqref{eq:PDE2} by
\bq\label{eq:approx2}
\tilde{F}_i[u] \equiv F\left(x_i,u_i,\min\limits_{\theta\in\tilde{\Af}}\Dt_{\theta\theta}u_i,\max\limits_{\theta\in\tilde{\Af}}\Dt_{\theta\theta}u_i\right) = 0.
\eq

We note that for boundary nodes $x_i\in\G\cap\partial\Omega$, we simply enforce the Dirichlet boundary data and the monotone scheme is
\[ u(x_i)-g(x_i) = 0. \]

These schemes are consistent and monotone.
\begin{lemma}[Consistency]\label{lem:consistent1}
Let $F$ be a continuous, degenerate elliptic operator.
Then the scheme~\eqref{eq:approx1} is a consistent approximation of~\eqref{eq:PDE1}.
\end{lemma}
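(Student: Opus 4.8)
The plan is to verify the two inequalities in Definition~\ref{def:consistency} directly, by fixing a smooth test function $\phi$ and tracking how the meshfree operator $\Dt_{\theta\theta}$ behaves as $\epsilon = (h, d\theta) \to 0$ with the coupling $d\theta = \bO(\sqrt{h})$ (so that $r = \bO(\sqrt h) \to 0$ as well). The starting point is equation~\eqref{eq:fd}, which already records the key estimate: for a $C^2$ function, $\Dt_{\theta\theta}\phi(x_0) = \phi_{\theta\theta}(x_0) + \bO(r + d\theta)$. First I would observe that this error bound is locally uniform in $x_0$ — the implicit constant depends only on bounds for $D^2\phi$ (and $D^3\phi$ for the higher-order terms) on a neighbourhood of $x$ — and that $r+d\theta \to 0$ under our refinement regime. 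Hence for $y \to x$ we have $\Dt_{\theta\theta}\phi(y) \to \phi_{\theta\theta}(x)$ for each fixed $\theta \in \Af$, and since $\Af$ is a \emph{finite} set of directions, this convergence holds simultaneously for all $\theta \in \Af$.

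The second step handles the perturbation arguments $\phi(y)+\xi$ and the zeroth-order slot. The scheme~\eqref{eq:approx1} is $F(x_i, u_i, \Dt_{\theta\theta}u_i; \theta\in\Af)$, and because the increments $u(x)-u(\cdot)$ enter $\Dt_{\theta\theta}$ only through differences, the additive constant $\xi$ cancels entirely in the directional-derivative arguments: $\Dt_{\theta\theta}(\phi(\cdot)+\xi) = \Dt_{\theta\theta}\phi(\cdot)$. The constant $\xi$ survives only in the second (pointwise) argument of $F$, namely as $\phi(y)+\xi$. So as $\xi \to 0$ and $y \to x$, the arguments fed into $F$ converge to $(x, \phi(x), \phi_{\theta\theta}(x); \theta\in\Af)$. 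Now invoke continuity of $F$: the composite $F(y, \phi(y)+\xi, \Dt_{\theta\theta}\phi(y); \theta\in\Af)$ converges to $F(x, \phi(x), \phi_{\theta\theta}(x); \theta\in\Af)$. For interior $x$ this is exactly $F$ evaluated on the true Hessian data of $\phi$, and since $F$ is continuous we have $F = F^* = F_*$ there, so both the $\limsup$ and $\liminf$ conditions collapse to an equality and are trivially satisfied. (I should note here that the left-hand sides of Definition~\ref{def:consistency} quantify over \emph{all} admissible point clouds with resolution $\le \epsilon$; the uniformity of the $\bO(r+d\theta)$ bound in~\eqref{eq:fd}, which holds for any stencil selected by~\eqref{eq:stencil}, is what makes the $\limsup$/$\liminf$ over point clouds behave correctly.)

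The third step is the boundary. For $x_i \in \G\cap\partial\Omega$ the scheme reads $u(x_i)-g(x_i)=0$, matching the boundary extension $F(x,u(x),D^2\phi(x)) = u(x)-g(x)$ from the remark after Definition~\ref{def:subsuper}; with the test function this gives $\phi(y)+\xi-g(y) \to \phi(x)-g(x)$, again by continuity, so consistency holds verbatim at boundary points. The only subtlety worth spelling out is what happens in the transition: a point $x\in\partial\Omega$ may be approached by \emph{interior} nodes $y$ of the point cloud as $\epsilon\to0$, so one must check that $\Dt_{\theta\theta}\phi(y)$ stays bounded (indeed converges to $\phi_{\theta\theta}(x)$) even when $y$ is within $r$ of the boundary — this is guaranteed by Lemma~\ref{lem:existBdy} and the uniform truncation estimate, under the stated boundary-resolution hypothesis $h_B \le 2\delta\tan(d\theta/2)$.

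I expect the main obstacle to be purely bookkeeping rather than conceptual: making the $\bO(r+d\theta)$ in~\eqref{eq:fd} genuinely uniform over $x_0$ and over the choice of admissible stencil, so that the triple limit in Definition~\ref{def:consistency} (over $\epsilon$, over $y$, and implicitly over point clouds) can be passed through the continuous $F$. Once that uniformity is in hand — and it follows from inspecting the Taylor-expansion derivation preceding~\eqref{eq:fd}, where every error term is controlled by sup-norms of derivatives of $\phi$ on a fixed compact neighbourhood of $x$ and by $r$ and $d\theta$ alone — the rest is a routine continuity argument. There is no genuine $\limsup$/$\liminf$ gap to exploit here because $F$ is assumed continuous; the semicontinuous envelopes in Definition~\ref{def:consistency} matter only for discontinuous operators (e.g.\ the Dirichlet extension at $\partial\Omega$, already handled above).
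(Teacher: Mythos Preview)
Your proposal is correct and follows essentially the same route as the paper: invoke the truncation estimate~\eqref{eq:fd} for each $\theta\in\Af$ and then pass through the continuous operator $F$. The paper's own proof is a terse three-line computation that writes $\tilde{F}_i[\phi] = F(x_i,\phi_i,\phi_{\theta\theta}(x_i);\theta\in\Af) + \bO(\kappa(r+d\theta))$ via the modulus of continuity $\kappa$ of $F$, whereas you spell out the uniformity in $x_0$, the cancellation of $\xi$ in the difference quotients, and the boundary case; this extra care is warranted by Definition~\ref{def:consistency} but does not change the underlying argument.
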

\begin{proof}
Let $u\in C^2$.  Then 
\begin{align*}
\tilde{F}_i[u] &= F(x_i,u_i,\Dt_{\theta\theta}u_i;\theta\in\Af)\\
  &= F\left(x_i,u_i,\frac{\partial^2u_i}{\partial e_{\theta}^2}+\bO(r+d\theta);\theta\in\Af\right)\\
	&= F\left(x_i,u_i,\frac{\partial^2u_i}{\partial e_{\theta}^2};\theta\in\Af\right) + \bO(\kappa(r+d\theta))
\end{align*}
where $\kappa$ is the modulus of continuity of $F$.
\qed\end{proof}

\begin{lemma}[Consistency]\label{lem:consistent2}
Let $F$ be a continuous, degenerate elliptic operator.
Then the scheme~\eqref{eq:approx2} is a consistent approximation of~\eqref{eq:PDE2}.
\end{lemma}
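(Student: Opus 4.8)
The plan is to mirror the proof of Lemma~\ref{lem:consistent1}, with the extra observation that taking a finite minimum or maximum is a Lipschitz (in fact non-expansive) operation, so the directional-resolution error can be absorbed into the modulus of continuity of $F$. First I would fix $u\in C^2$ and a point $x_i$, and use the basic estimate~\eqref{eq:fd} for each direction $\theta\in\tilde{\Af}$ to write $\Dt_{\theta\theta}u_i = \partial^2 u_i/\partial e_\theta^2 + \bO(r+d\theta)$, with the constant in the error term uniform over the finitely many $\theta\in\tilde{\Af}$ (since the bound in~\eqref{eq:fd} depends only on $r$ and $d\theta$, not on $\theta$). Taking the minimum over $\theta\in\tilde{\Af}$ then gives
\[
\min_{\theta\in\tilde{\Af}}\Dt_{\theta\theta}u_i = \min_{\theta\in\tilde{\Af}}\frac{\partial^2 u_i}{\partial e_\theta^2} + \bO(r+d\theta),
\]
and similarly for the maximum, using that $|\min_\theta a_\theta - \min_\theta b_\theta| \leq \max_\theta |a_\theta - b_\theta|$.

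Next I would compare the discrete minimum over $\tilde{\Af}$ with the true continuous extreme $\lambda_-(D^2 u_i) = \min_{\theta\in[0,2\pi)} \partial^2 u_i/\partial e_\theta^2$. Since $\theta\mapsto \partial^2 u_i/\partial e_\theta^2 = e_\theta^T D^2u(x_i) e_\theta$ is a smooth (trigonometric) function of $\theta$ with derivative bounded by $\|D^2u(x_i)\|$, and $\tilde{\Af}$ is a $d\theta$-net of $[0,2\pi)$ by~\eqref{eq:directions}, the discrete and continuous minima differ by $\bO(d\theta)$; likewise for the maxima. Hence
\[
\min_{\theta\in\tilde{\Af}}\Dt_{\theta\theta}u_i = \lambda_-(D^2u(x_i)) + \bO(r+d\theta), \qquad
\max_{\theta\in\tilde{\Af}}\Dt_{\theta\theta}u_i = \lambda_+(D^2u(x_i)) + \bO(r+d\theta).
\]
Feeding these into $F$ and invoking its modulus of continuity $\kappa$ exactly as in Lemma~\ref{lem:consistent1} yields
\[
\tilde{F}_i[u] = F\bigl(x_i,u_i,\lambda_-(D^2u(x_i)),\lambda_+(D^2u(x_i))\bigr) + \bO\bigl(\kappa(r+d\theta)\bigr),
\]
which is the desired consistency statement once one recalls that $r,d\theta\to0$ as $h\to0$ and takes the $\limsup/\liminf$ required by Definition~\ref{def:consistency} (the $\xi\to0$ perturbation is handled by continuity of $F$ in its second argument, as before).

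The one genuinely new ingredient compared with Lemma~\ref{lem:consistent1} — and the step I would be most careful about — is the uniformity of the truncation error over $\theta\in\tilde\Af$: the bound $\bO(r+d\theta)$ in~\eqref{eq:fd} must not hide a constant that blows up for some directions. This is fine because that constant depends only on the stencil geometry (the $h_i\le r$ and $d\theta_i\le d\theta$ bounds) and on $\|D^2u\|_\infty$, $\|D^3u\|_\infty$ on a neighbourhood of $x_i$, all of which are uniform over the finite set $\tilde\Af$; one should also note $\#\tilde\Af = \bO(1/d\theta)$ is finite for each fixed $d\theta$, so the minimum and maximum are genuinely attained. Everything else is the non-expansiveness of $\min$/$\max$ and the Lipschitz continuity of $\theta\mapsto e_\theta^TD^2u\,e_\theta$, both elementary. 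I would therefore keep the proof short, essentially: apply~\eqref{eq:fd} uniformly, use non-expansiveness of $\min/\max$, use the $d\theta$-net property of $\tilde\Af$, and conclude via the modulus of continuity of $F$, citing the structure of Lemma~\ref{lem:consistent1}.
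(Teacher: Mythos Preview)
Your proof is correct and follows essentially the same route as the paper's: apply~\eqref{eq:fd} uniformly in $\theta$, pass the error through the $\min/\max$ by non-expansiveness, compare the discrete extrema over $\tilde{\Af}$ with $\lambda_\pm(D^2u)$, and absorb everything into the modulus of continuity of $F$. The only minor difference is that the paper cites~\cite[Lemma~5.3]{ObermanWS} to obtain an $\bO(d\theta^2)$ bound on the directional-resolution error (exploiting that the extremum of $\theta\mapsto e_\theta^T D^2u\,e_\theta$ is a critical point), whereas your Lipschitz argument yields only $\bO(d\theta)$; either bound is absorbed into $\bO(r+d\theta)$, so the conclusion is unaffected.
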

\begin{proof}
Let $u\in C^2$.  From~\cite[Lemma 5.3]{ObermanWS}, 
\[ \min\limits_{\theta\in\tilde{\Af}}\frac{\partial^2u}{\partial e_\theta^2} = \lambda_-(D^2u) + \bO(d\theta^2), \quad \max\limits_{\theta\in\tilde{\Af}}\frac{\partial^2u}{\partial e_\theta^2} = \lambda_+(D^2u) + \bO(d\theta^2). \]
Then 
\begin{align*}
\tilde{F}_i[u] &= F\left(x_i,u_i,\min\limits_{\theta\in\tilde{\Af}}\Dt_{\theta\theta}u_i,\max\limits_{\theta\in\tilde{\Af}}\Dt_{\theta\theta}u_i\right)\\
  &= F\left(x_i,u_i,\min\limits_{\theta\in\tilde{\Af}}\frac{\partial^2u_i}{\partial e_\theta^2}+\bO(r+d\theta),\max\limits_{\theta\in\tilde{\Af}}\frac{\partial^2u_i}{\partial e_\theta^2}+\bO(r+d\theta)\right)\\
	&= F\left(x_i,u_i,\lambda_-(D^2u_i)+\bO(r+d\theta),\lambda_+(D^2u_i)+\bO(r+d\theta)\right)\\
	&= F\left(x_i,u_i,\lambda_-(D^2u_i),\lambda_+(D^2u_i)\right) + \bO(\kappa(r+d\theta))
\end{align*}
where $\kappa$ is the modulus of continuity of $F$.
\qed\end{proof}

\begin{remark}
If the nonlinear operator $F$ is Lipschitz continuous and we make use of the optimal scaling $r, d\theta = \bO(\sqrt{h})$, the formal consistency error of the scheme is $\bO(\sqrt{h})$.
\end{remark}

\begin{lemma}[Monotonicity]\label{lem:monotone}
Let $F$ be a continuous, degenerate elliptic operator.  Then the approximations~\eqref{eq:approx1} and~\eqref{eq:approx2} are monotone for sufficiently small~$d\theta$.
\end{lemma}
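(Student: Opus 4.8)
The plan is to show that each approximation $\tilde F_i[u]$ is a non-decreasing function of the differences $u_i - u_j$ for $j\in\Nf(i)$, which is exactly the monotonicity requirement of Definition~\ref{def:monotonicity}. The argument reduces, via the chain rule and the assumed degenerate ellipticity of $F$, to establishing that each building block $\Dt_{\theta\theta}u_i$ is itself a monotone function of the local differences, i.e. that the coefficients $a_i$ in~\eqref{eq:fd} are non-negative. So the first step is to record the composition structure: in~\eqref{eq:approx1}, $\tilde F_i[u]$ depends on $u$ only through $u_i$ and through the quantities $\Dt_{\theta\theta}u_i = \sum_{k=1}^4 a_k^{(\theta)}(u(x_k)-u(x_0))$ for $\theta\in\Af$; since $F$ is degenerate elliptic it is non-increasing in each second-directional-derivative slot, and increasing each difference $u_i - u(x_k)$ (holding $u_i$ fixed) can only decrease $\Dt_{\theta\theta}u_i$, hence can only increase $\tilde F_i[u]$. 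For~\eqref{eq:approx2} I would additionally note that $\min_{\theta\in\tilde\Af}\Dt_{\theta\theta}u_i$ and $\max_{\theta\in\tilde\Af}\Dt_{\theta\theta}u_i$ are monotone (pointwise min/max of finitely many non-decreasing functions is non-decreasing), so the same conclusion follows. The boundary nodes, where the scheme is $u(x_i)-g(x_i)$, are trivially monotone.

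The substantive step — and the reason for the qualifier ``for sufficiently small $d\theta$'' — is the non-negativity of the coefficients~\eqref{eq:coeffs}. Here I would invoke the sign analysis already carried out in subsection~\ref{sec:approximation}: because the selected neighbour $x_k$ lies in the $k$th quadrant of the rotated frame, one has $C_1,C_4,S_1,S_2\ge 0$ and $C_2,C_3,S_3,S_4\le 0$, and feeding these signs into the explicit formulas~\eqref{eq:coeffs} shows each numerator and the common denominator have matching sign, so $a_k\ge 0$. The role of small $d\theta$ is twofold: first, Theorem~\ref{thm:exist} guarantees that the four stencil points $x_k$ in the prescribed quadrants actually exist only once $d\theta$ is small enough (relative to the domain regularity and the boundary resolution $h_B\le 2\delta\tan(d\theta/2)$); second, one must check the denominator in~\eqref{eq:coeffs} does not vanish, i.e. the $3\times 3$ linear system~\eqref{eq:conditions}--\eqref{eq:symmetry} is genuinely solvable, which is where the alignment $d\theta_k\le d\theta$ is used to keep the relevant $2\times 2$ minors (such as $C_1 S_4 - C_4 S_1$ and $C_3 S_2 - C_2 S_3$) bounded away from degeneracy — the $C_k$ are $\bO(h_k)$ while the $S_k$ are $\bO(h_k d\theta_k)$, but the quadrant separation forces these cross-terms to be nonzero and of a definite sign. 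I anticipate this denominator-nonvanishing point to be the main obstacle: one has to argue that for each of the selected points the pair $(C_k,S_k)$ is not accidentally proportional to that of a neighbouring quadrant's point, which follows because the angular coordinates $\theta_k$ lie in disjoint half-open quadrants $[(k-1)\pi/2, k\pi/2)$, but writing this cleanly requires care.

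Putting the pieces together, the proof of Lemma~\ref{lem:monotone} is short: (i) by Theorem~\ref{thm:exist}, for $d\theta$ small the stencil~\eqref{eq:stencil} exists at every node; (ii) by the quadrant sign bookkeeping, the coefficients~\eqref{eq:coeffs} satisfy $a_k\ge 0$, so each $u\mapsto \Dt_{\theta\theta}u_i$ is a non-decreasing function of the differences $u_i - u(x_k)$; (iii) since $F$ is degenerate elliptic — hence non-increasing in its Hessian/second-derivative arguments — and since $\min$ and $\max$ over a finite index set preserve monotonicity, the composite maps~\eqref{eq:approx1} and~\eqref{eq:approx2} are non-decreasing in $u_i - u_j$ for all $j\in\Nf(i)$; (iv) boundary nodes contribute the trivially monotone term $u(x_i)-g(x_i)$. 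This is exactly monotonicity in the sense of Definition~\ref{def:monotonicity}, completing the proof. The only real work is step (ii), and essentially all of it has already been done in the construction; the lemma mostly needs to package it and cite Theorem~\ref{thm:exist} for the existence of the stencil that makes~\eqref{eq:coeffs} meaningful.
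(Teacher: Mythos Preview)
Your proposal is correct and follows essentially the same route as the paper's proof: invoke Theorem~\ref{thm:exist} for the existence of the four-point stencil, use the quadrant sign bookkeeping already done in subsection~\ref{sec:approximation} to get $a_k\ge 0$, and then compose with the degenerate ellipticity of $F$ (and the order-preserving nature of $\min/\max$ for~\eqref{eq:approx2}). One small slip: in your step~(ii) you write that $\Dt_{\theta\theta}u_i$ is \emph{non-decreasing} in $u_i-u(x_k)$, but since $\Dt_{\theta\theta}u_i=-\sum_k a_k(u_i-u(x_k))$ with $a_k\ge 0$ it is in fact \emph{non-increasing} --- this is what you said correctly in the first paragraph, and it is exactly what is needed so that the composite with the non-increasing $F$ is non-decreasing.
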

\begin{proof}
The schemes at a point $x_i$ rely on approximations of the second directional derivatives $u_{\theta\theta}$ that are non-increasing in each $u_i-u_j$; these can be constructed by Theorem~\ref{thm:exist}.
The functions~$F$ are non-decreasing in the argument $u_i$ and non-increasing in the $u_{\theta\theta}$.  Thus they are non-decreasing in each $u_i-u_j$ and therefore monotone.
\qed\end{proof}

\begin{theorem}[Convergence]\label{thm:converge}
Let $F$ be a continuous, degenerate elliptic operator with a comparison principle and let $u$ be the unique viscosity solution of the PDE~\eqref{eq:PDE1} (or~\eqref{eq:PDE2}).  Consider a sequence of point clouds $\G^n$, with parameters defined as in subsection~\ref{sec:notation}, which satisfy the following conditions.
\begin{itemize}
\item The spatial resolution $h^n\to0$ as $n\to\infty$.
\item The desired angular resolution $d\theta^n$ is chosen so that both $h^n/d\theta^n\to0$ and $d\theta^n\to0$ as $h^n\to0$.
\item The boundary resolution $h_B^n \leq 2\delta^n\tan(d\theta^n/2)$.
\end{itemize}
Let $u^n$ be the solution of the approximation scheme~\eqref{eq:approx1} (or~\eqref{eq:approx2}).  Then as $n\to\infty$, $u^n$ converges uniformly to $u$.
\end{theorem}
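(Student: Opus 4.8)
The plan is to deduce the result from the abstract convergence theorem of Barles--Souganidis in the form stated as Theorem~\ref{thm:convergeVisc}, by checking its three structural hypotheses---existence of the scheme, monotonicity, and consistency---for the meshfree discretisation, and then separately arguing stability, i.e.\ existence of uniformly bounded discrete solutions.

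First I would recast the family of point clouds as a single-parameter family of schemes. Associate to each $\G^n$ a parameter $\epsilon_n$ (one may simply take $\epsilon_n = h^n$, since the stated conditions tie $d\theta^n$, $h_B^n$, and $\delta^n$ to $h^n$), and define $F^{\epsilon_n}$ on $\bar{\Omega}\times\R\times C(\bar{\Omega})$ by setting its value at $x$ equal to $\tilde{F}_i[\cdot]$ from~\eqref{eq:approx1} (resp.~\eqref{eq:approx2}) at the node $x_i\in\G^n$ closest to $x$, and equal to $u(x)-g(x)$ when the closest node lies on $\partial\Omega$. Because $h^n\to0$, evaluating $F^{\epsilon_n}$ at a fixed $x$ is, in the limit, the same as evaluating the scheme at nodes converging to $x$, so this extension does not affect any of the subsequent limits.

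Next I would verify the three hypotheses. Existence of the four stencil points $x_1,\dots,x_4$ at every interior node, hence well-posedness of $\tilde{F}_i[\cdot]$, is exactly Theorem~\ref{thm:exist}: the assumption $h_B^n\le 2\delta^n\tan(d\theta^n/2)$ is what that theorem requires, and $d\theta^n\to0$ guarantees that $d\theta^n$ is eventually small enough for the boundary argument of Lemma~\ref{lem:existBdy}. Monotonicity for $n$ large is Lemma~\ref{lem:monotone}. For consistency in the sense of Definition~\ref{def:consistency}, fix $\phi\in C^2(\bar{\Omega})$ and $x\in\bar{\Omega}$. If $x$ is interior, then for $y\to x$ the associated node and its stencil satisfy $\max_i h_i\le r^n$ and $\max_i d\theta_i\le d\theta^n$, so by~\eqref{eq:fd} each directional term $\Dt_{\theta\theta}\phi$ equals $\partial^2\phi/\partial e_\theta^2$ up to $\bO(r^n+d\theta^n)=\bO(h^n/d\theta^n+d\theta^n)$; the additive constant $\xi$ cancels in the differences $\phi(x_i)-\phi(x_0)$ and affects only the (continuous) dependence of $F$ on its second argument; and Lemma~\ref{lem:consistent1} (resp.~Lemma~\ref{lem:consistent2}, which additionally uses \cite[Lemma~5.3]{ObermanWS} for the directional sampling error) then shows $F^{\epsilon_n}(y,\phi(y)+\xi,\phi(y)-\phi(\cdot))\to F(x,\phi(x),D^2\phi(x))$ as $\epsilon_n\to0$, $y\to x$, $\xi\to0$, using the hypotheses $h^n/d\theta^n\to0$, $d\theta^n\to0$ and the continuity of $F$. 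At a boundary point $x\in\partial\Omega$ both $F^{\epsilon_n}$ and the extended operator equal $\phi(x)-g(x)$, so consistency is immediate. Since the limits hold with equality, both the $\limsup$ and $\liminf$ inequalities of Definition~\ref{def:consistency} follow.

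Finally I would address stability, which is the one genuinely PDE-dependent step and the \emph{main obstacle}: Theorem~\ref{thm:convergeVisc} presupposes a solution $u^n$ of the scheme with bounds independent of $n$. When the scheme is strictly monotone (proper), this follows from a discrete comparison principle as in \cite[Theorem~8]{ObermanDiffSchemes}; in general one uses the available well-posedness theory for the PDE to build smooth strict sub- and supersolutions, which by the consistency just established are, for $n$ large, sub- and supersolutions of the finite-dimensional nonlinear system~\eqref{eq:approx1} (or~\eqref{eq:approx2}), yielding uniform $L^\infty$ bounds and, via a Perron-type or topological-degree argument on the discrete system, existence of $u^n$ (cf.\ \cite[Lemmas~35-36]{FroeseGauss}). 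With existence, uniform boundedness, monotonicity, and consistency all in hand, Theorem~\ref{thm:convergeVisc} applies and gives $u^n\to u$ uniformly on $\bar{\Omega}$, which is the claim.
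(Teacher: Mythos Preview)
Your proposal is correct and follows precisely the paper's own route: invoke Theorem~\ref{thm:exist} for existence of the stencil, Lemmas~\ref{lem:consistent1}--\ref{lem:consistent2} for consistency, Lemma~\ref{lem:monotone} for monotonicity, and then appeal to the Barles--Souganidis framework (Theorem~\ref{thm:convergeVisc}). The paper's proof is in fact just those two citations; your additional care with the single-parameter recasting and the stability/existence of $u^n$ simply spells out what the paper relegates to the remarks preceding Theorem~\ref{thm:convergeVisc}.
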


\begin{proof}
By Lemmas~\ref{lem:consistent1}-\ref{lem:monotone} the schemes are consistent and monotone.  Therefore they converge to the viscosity solution of the underlying PDE~\cite{BSnum,ObermanDiffSchemes}.
\qed\end{proof}

\subsection{Filtered schemes}\label{sec:filter}

One of the apparent drawbacks of the meshfree approximation scheme described above is its low accuracy---formally, it is at best $\bO(\sqrt{h})$.  Indeed, for certain non-degenerate equations, schemes as accurate as $\bO(h^2)$ may be possible using regular grids, at least in the absence of boundary effects~\cite{Bonnans_HJB}.  One clear advantage of the meshfree schemes is their ability to preserve consistency and order of accuracy near boundary points and in complicated domains.  More importantly, though, these monotone schemes can provide the foundation for higher-order convergent filtered schemes as in~\cite{FOFiltered}.  This opens up many possibilities (finite difference, finite element, etc.) for designing higher-order, provably convergent schemes on general meshes or point clouds.

To accomplish this, we let $F_N[u]$ be any higher-order scheme, which need not be monotone or even stable, and may be defined on a very general mesh.  Using the approach presented in this article, we can construct a monotone approximation scheme $F_M[u]$ that is defined on the same mesh (or point cloud).  These can be combined into the filtered scheme
\bq\label{eq:fdfilter}
F_F[u] = F_M[u] + \epsilon(h)S\left(\frac{F_A^h-F_M^h}{\epsilon(h)}\right)
\eq
where the filter $S$ is given by
\bq\label{eq:filter}
S(x) = \begin{cases}
x & \abs{x} \leq 1 \\
0 & \abs{x} \ge 2\\
-x+ 2  & 1\le x \le 2 \\
-x-2  & -2\le x\le -1.
\end{cases} 
\eq

As long as $\epsilon(h)\to0$ as $h\to0$, this approximation converges to the viscosity solution of the PDE.  Moreover, if $\epsilon(h)$ is larger than the discretisation error of the monotone scheme, the formal accuracy of the filtered scheme is the same as the formal accuracy of the non-monotone scheme.

\section{Computational Examples}\label{sec:compute}

In this section, we provide several computational examples to demonstrate the correctness and flexibility of our meshfree finite difference approximations.  In each example, $N$ denotes the total number of discretisation points, which includes interior and boundary points.  Unless otherwise stated, we choose $d\theta = 2\sqrt{h}$ in each example.

\subsection{Linear degenerate equation}\label{sec:deg}

For our first example, we consider the linear degenerate equation
\bq\label{eq:lin}
\begin{cases}
-u_{\nu\nu}(x,y) = 0, & x^2+y^2 < 1\\
u(x,y) = \sin(2\pi(x-\sqrt{8}y)), & x^2+y^2 = 1
\end{cases}
\eq
where $\nu=(\sqrt{8},1)$.  The exact solution is 
\[ u(x,y) = \sin(2\pi(x-\sqrt{8}y)), \]
which is plotted in Figure~\ref{fig:DegSol}.

We note that this is an example of an operator for which no monotone, consistent approximation can be constructed on a finite stencil on a Cartesian grid~\cite{MotzkinWasow}.

We first solve this equation using a point cloud generated by a uniform Cartesian mesh restricted to the interior of the unit circle, which is augmented by $\bO(h^{-3/2})$ points uniformly distributed on the boundary of the circle; see Figure~\ref{fig:DegMesh1}.  The discretised problem is a sparse, diagonally dominant linear system, which we solve using Matlab backslash.

Next, we demonstrate that our approximations converge even on highly unstructured point clouds.  To do this, we use randomly selected points in the interior of the unit circle, augmented by additional points randomly distributed on the boundary; see Figure~\ref{fig:DegMesh2}.  The point cloud is refined by randomly adding additional points.  

We note that if interior points are located too close to the boundary, the parameter $\delta$ can become extremely small, and it may not be possible to satisfy the condition on the boundary resolution $h_B$ given in Theorem~\ref{thm:converge}.  To overcome this challenge (for both uniform and random point clouds), we simply remove points if no monotone stencil can be found within the given search radius $r$.  This has the effect of removing points that are too close to the boundary of the domain (thus increasing $\delta$), and explains why the total number of points $N$ are slightly different in the two examples.

Convergence results for both tests are presented together in Table~\ref{table:deg}.  In the case of the Cartesian mesh, we observe convergence even though the approximation is not consistent on any fixed stencil.  However, because the search radius is  large enough to ensure decreasing angular resolution error, the predicted convergence is observed.  Although the random point cloud is highly unstructured, we again observe convergence in this setting, with a rate that is nearly unchanged.  

\begin{figure}
\centering
{\subfigure[]{\includegraphics[width=0.3\textwidth]{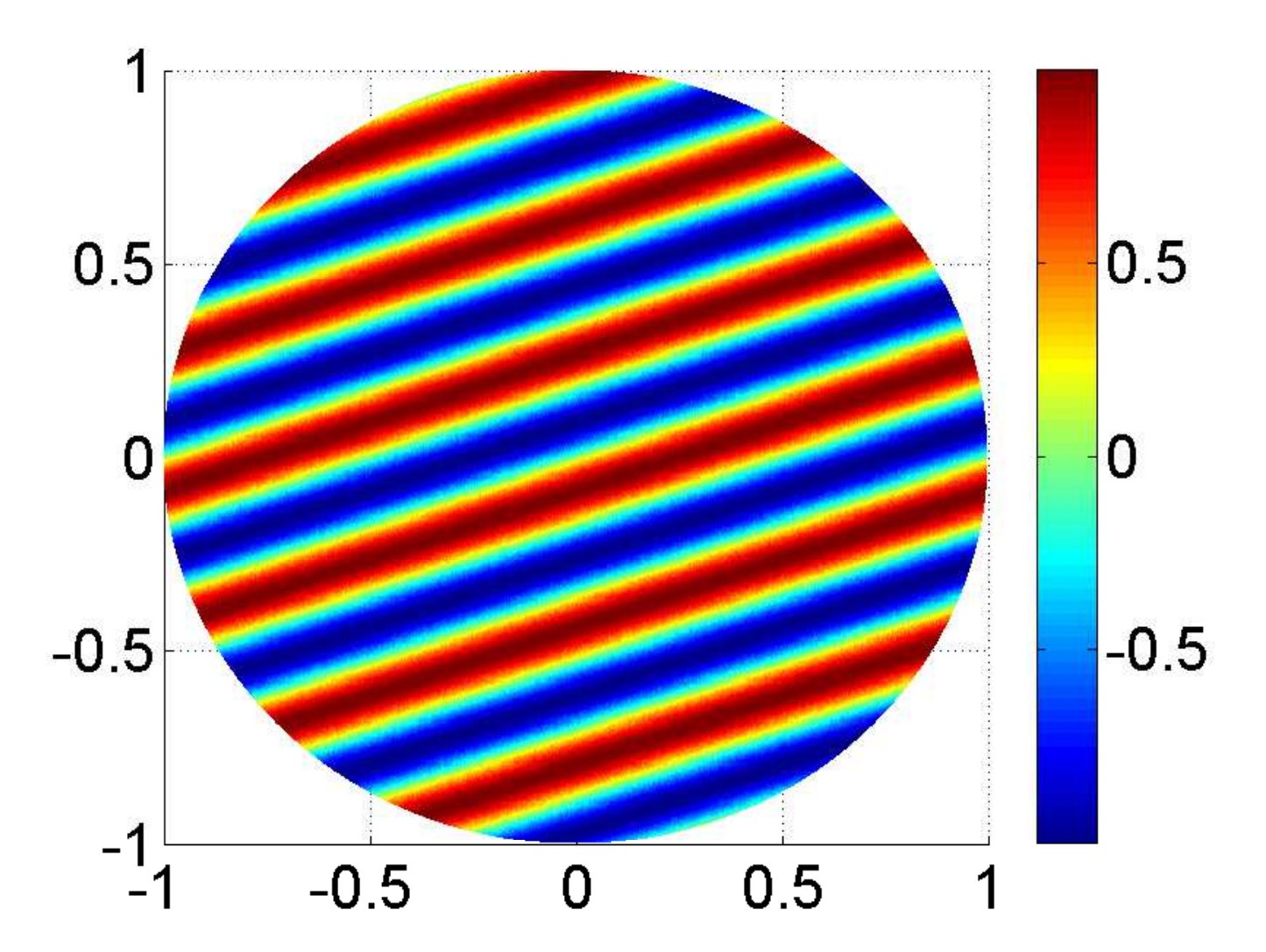}\label{fig:DegSol}}}
{\subfigure[]{\includegraphics[width=0.3\textwidth]{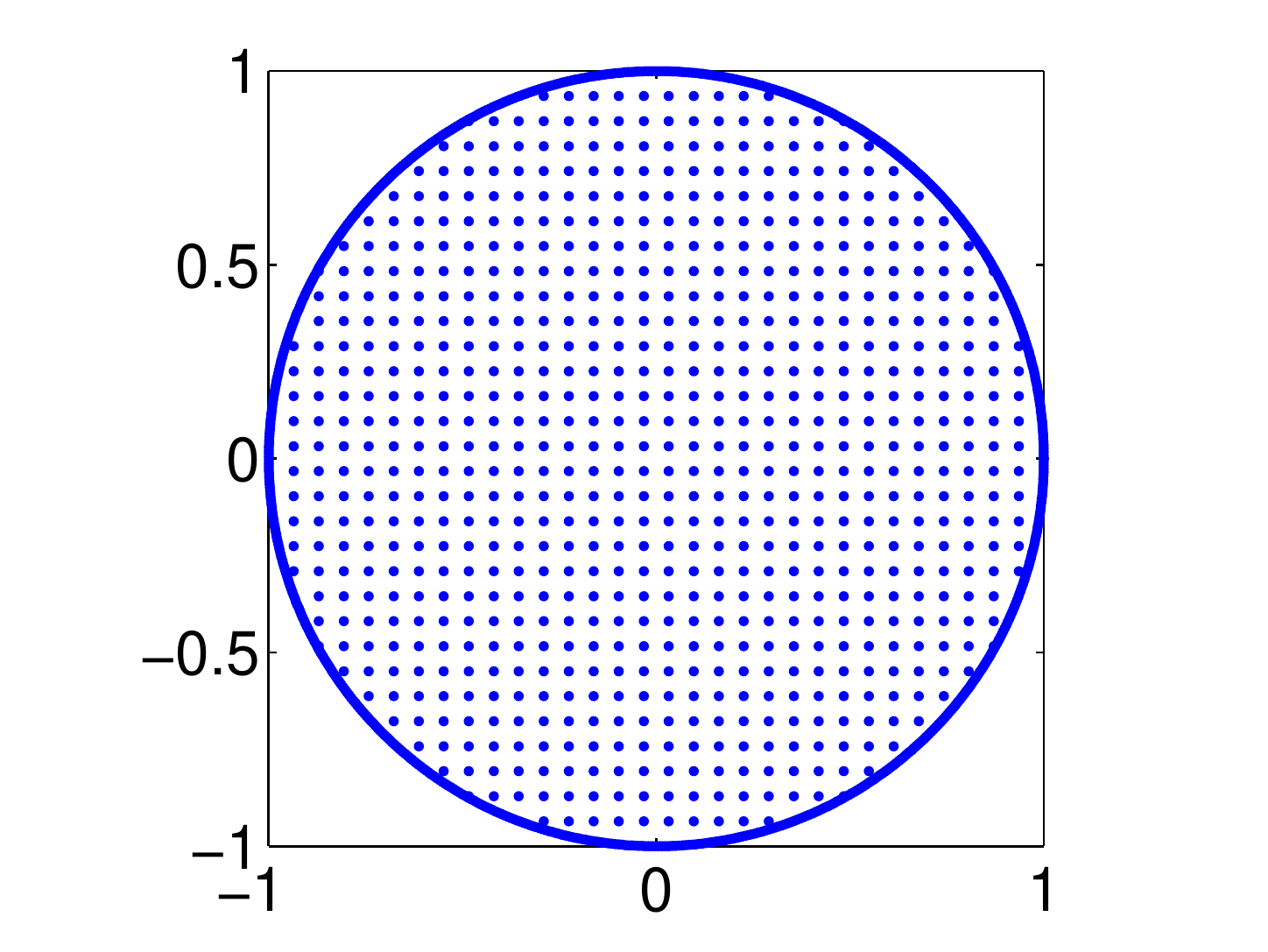}\label{fig:DegMesh1}}}
{\subfigure[]{\includegraphics[width=0.3\textwidth]{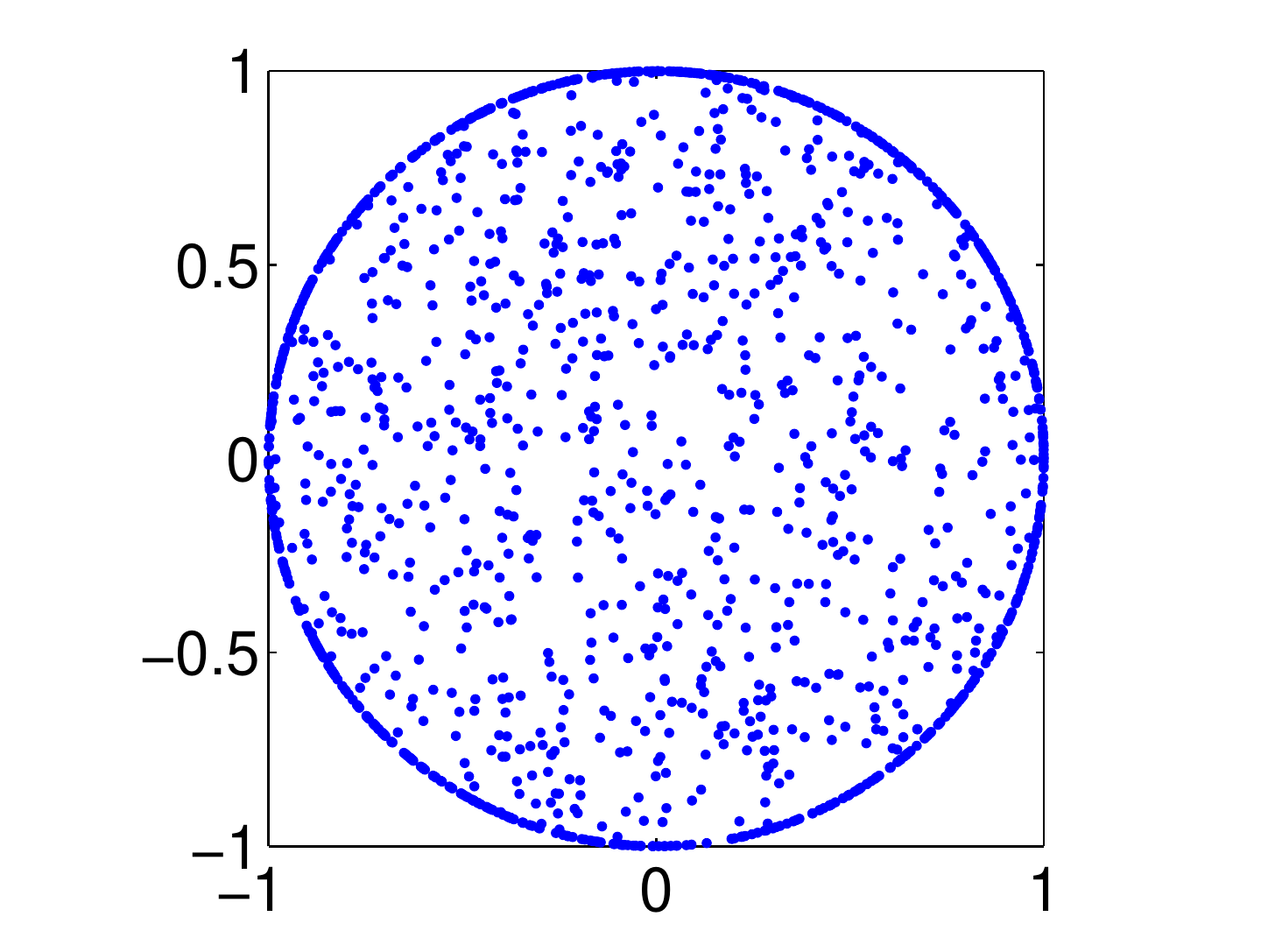}\label{fig:DegMesh2}}}
\caption{\subref{fig:DegSol}~Solution of the linear degenerate equation~\eqref{eq:lin}, \subref{fig:DegMesh1}~uniform point cloud, and \subref{fig:DegMesh2}~random point cloud.}
\label{fig:deg}
\end{figure}

\begin{table}[htp]
\centering
\small
\begin{tabular}{cccc|ccc}
\multicolumn{4}{c|}{Uniform Point Cloud} & \multicolumn{3}{c}{Random Point Cloud}\\
$h$ & $N$ & Max Error  & Rate ($N$) & $N$ & Max Error & Rate ($N$)\\
\hline
2/32  & 1,462  & $3.0\times10^{-1}$ & ---&1,459   &$7.5\times10^{-1}$&---\\
2/64  & 5,143  & $1.3\times10^{-1}$ & 0.7&5,138   &$2.6\times10^{-1}$&0.9\\
2/128 & 18,435 & $5.8\times10^{-2}$ & 0.6&18,430  &$1.1\times10^{-1}$&0.7\\
2/256 & 67,423 & $2.2\times10^{-2}$ & 0.7&67,412  &$2.6\times10^{-2}$&1.1\\
2/512 & 251,349& $6.1\times10^{-3}$ & 1.0&251,335 &$7.2\times10^{-3}$&1.0
\end{tabular}
\caption{Convergence results for the linear degenerate equation~\eqref{eq:lin}.}
\label{table:deg}
\end{table}

\subsection{Convex envelope}\label{sec:CE}
In our second example, we demonstrate the convergence of a meshfree finite difference approximation of the fully nonlinear convex envelope equation
\bq\label{eq:CE}
\begin{cases}
\max\{-\lambda_-(D^2u), u-g\} = 0, &x \in\Omega\\
u = 0.5, & x\in\partial\Omega.
\end{cases}
\eq
The equation is posed on an ellipse with semi-major axis equal to one and semi-minor axis equal to one-half, which is rotated through an angle of $\phi=\pi/6$.  The obstacle $g$ consists of two cones,
\begin{align*}
g_1(x,y) &= \sqrt{(x\cos\phi+y\sin\phi+0.5)^2+(-x\sin\phi + y\cos\phi)^2}\\
g_2(x,y) &= \sqrt{(x\cos\phi+y\sin\phi-0.5)^2+(-x\sin\phi + y\cos\phi)^2}\\
g(x,y) &= \min\left\{g_1(x,y),g_2(x,y),0.5\right\}
\end{align*}
and the exact solution is
\[
u(x,y) = \begin{cases}
\min\{g_1(x,y),g_2(x,y)\}, & \abs{x\cos\phi+y\sin\phi} \geq 0.5\\
\abs{-x\sin\phi + y\cos\phi}, & \abs{x\cos\phi+y\sin\phi} < 0.5.
\end{cases}
\]
See Figure~\ref{fig:CE}.  We note that this solution is only Lipschitz continuous, and the equation must be understood in a weak sense.

We perform computations using a uniform point cloud augmented by a uniform discretisation of the boundary. 
The discrete system is solved using a policy iteration procedure.  To do this, we note that the PDE~\eqref{eq:CE} (and its discretisation) can be written in the form
\[ \max\limits_\alpha\{L^\alpha u - g^\alpha\} = 0 \]
where the $L^\alpha$ are diagonally dominant linear operators---either the identity or second directional derivatives.  Then we can use the update scheme
\begin{align*}
\alpha_n &= \argmax\limits_\alpha\{L^\alpha u_n - g^\alpha\}\\
u_{n+1} &= (L^{\alpha_n})^{-1}g^{\alpha_n}.
\end{align*}

Computed results are presented in Table~\ref{table:CE}.  Despite the very low regularity of this example, the method converges, with a rate that appears close to the formal discretisation error of $\bO(\sqrt{h})$.

\begin{figure}[htp]
\centering
{\subfigure[]{\includegraphics[width=0.4\textwidth]{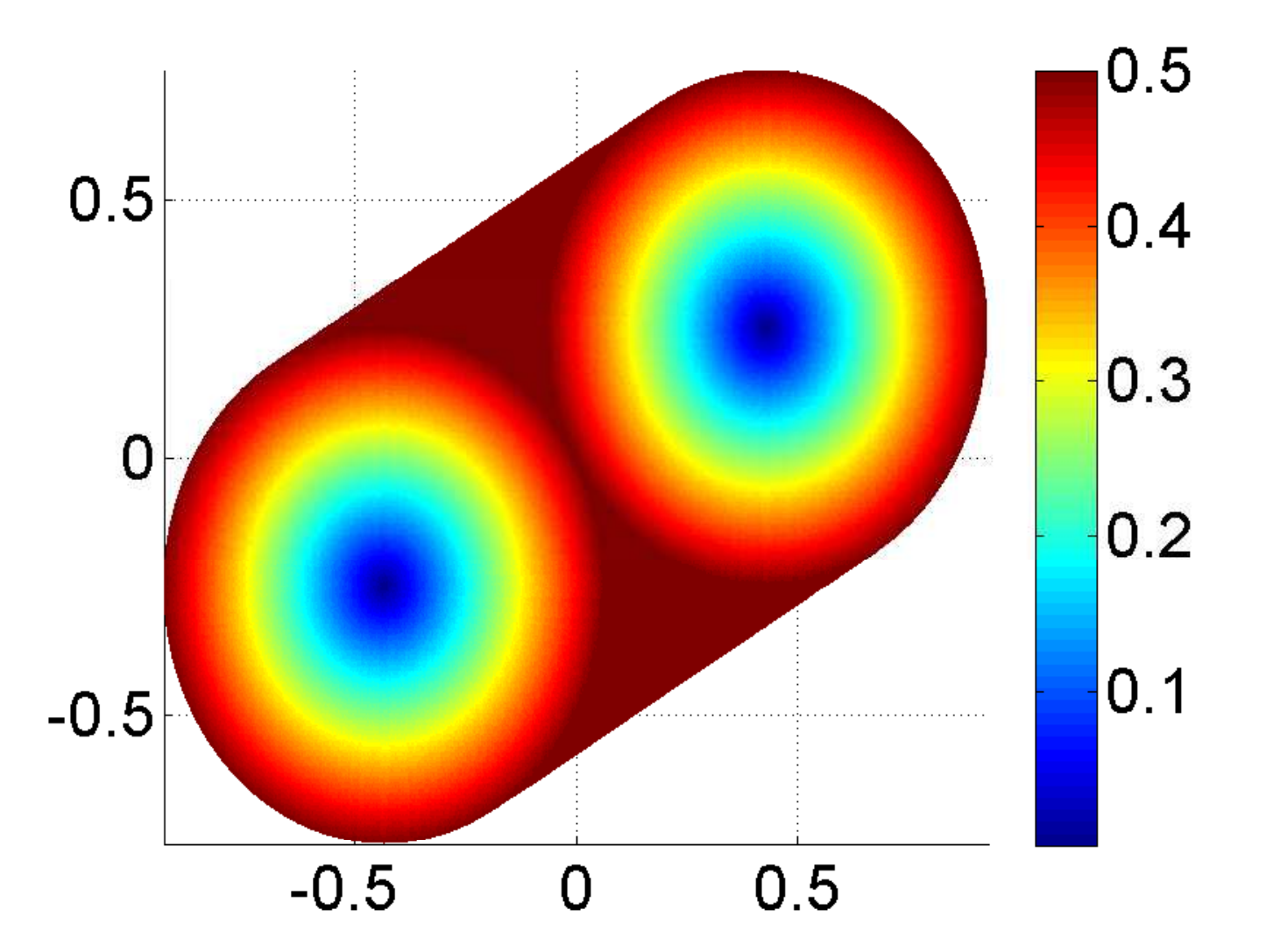}\label{fig:CEObs}}}
{\subfigure[]{\includegraphics[width=0.4\textwidth]{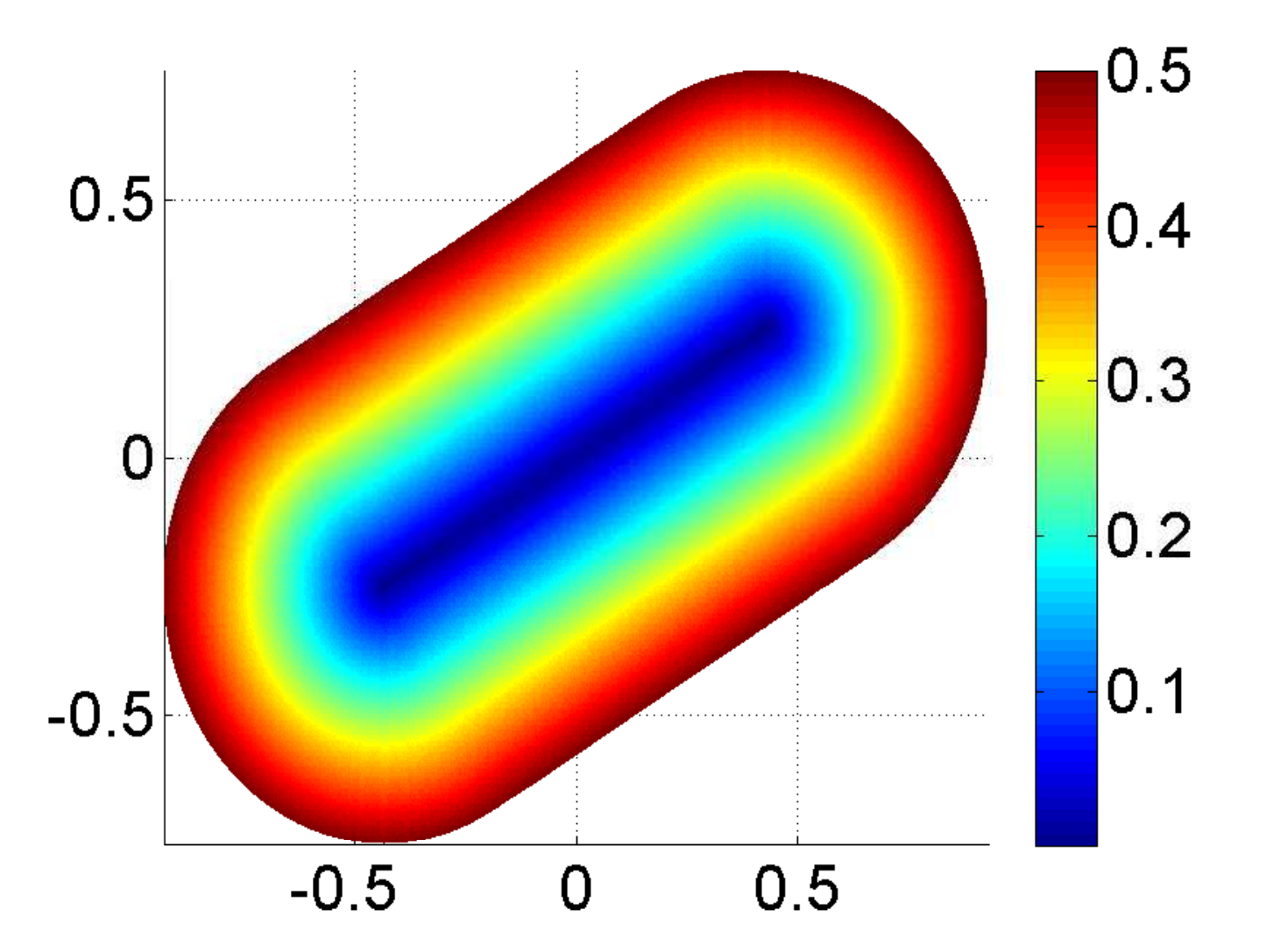}\label{fig:CESol}}}
\caption{\subref{fig:CEObs}~Obstacle~$g$ and \subref{fig:CESol}~solution~$u$ of the convex envelope equation~\eqref{eq:CE}.}
\label{fig:CE}
\end{figure}

\begin{table}[htp]
\centering
\begin{tabular}{ccccc}
$h$ & $N$ & Max Error & Rate ($h$) & Rate ($N$) \\
\hline
2/32  & 1,191  & $3.9\times10^{-2}$ & --- & --- \\
2/64  & 3,873  & $5.9\times10^{-2}$ & -0.6 & -0.3\\
2/128 & 13,069 & $2.7\times10^{-2}$ & 1.1 & 0.6\\
2/256 & 45,529 & $1.5\times10^{-2}$ & 0.9 & 0.5\\
2/512 & 163,081& $1.1\times10^{-2}$ & 0.4 & 0.2
\end{tabular}
\caption{Convergence results for the convex envelope equation~\eqref{eq:CE}.}
\label{table:CE}
\end{table}

\subsection{Obstacle problem}\label{sec:obstacle}

In our next example, we demonstrate the ease with which our meshfree approximations can be used on complicated domains.  To do this, we solve the obstacle problem
\bq\label{eq:obstacle}
\begin{cases}
\min\left\{-\Delta u, u-g\right\} = 0 , & x \in \Omega\\
u = 0, & x \in \partial\Omega
\end{cases}
\eq
on a domain $\Omega$ that contains both an exterior boundary and a highly-detailed interior boundary.
The point cloud (obtained from~\cite{dolfin}), obstacle~$g$, and computed solution~$u$ are presented in Figure~\ref{fig:dolphin}.  The nonlinear algebraic system was solved using policy iteration as in the previous example.

\begin{figure}[htp]
\centering
{\subfigure[]{\includegraphics[width=0.3\textwidth]{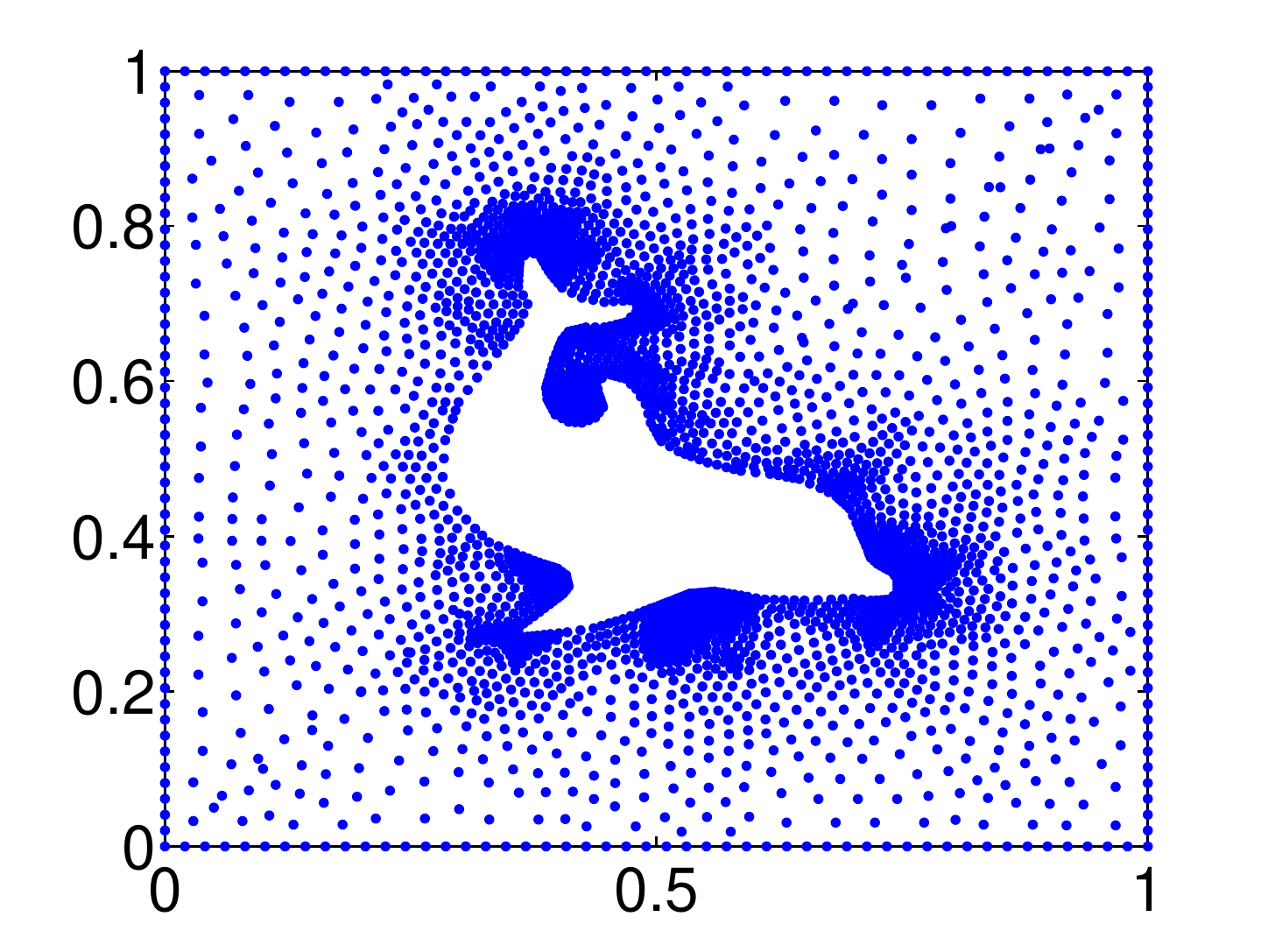}\label{fig:domain}}}
{\subfigure[]{\includegraphics[width=0.3\textwidth]{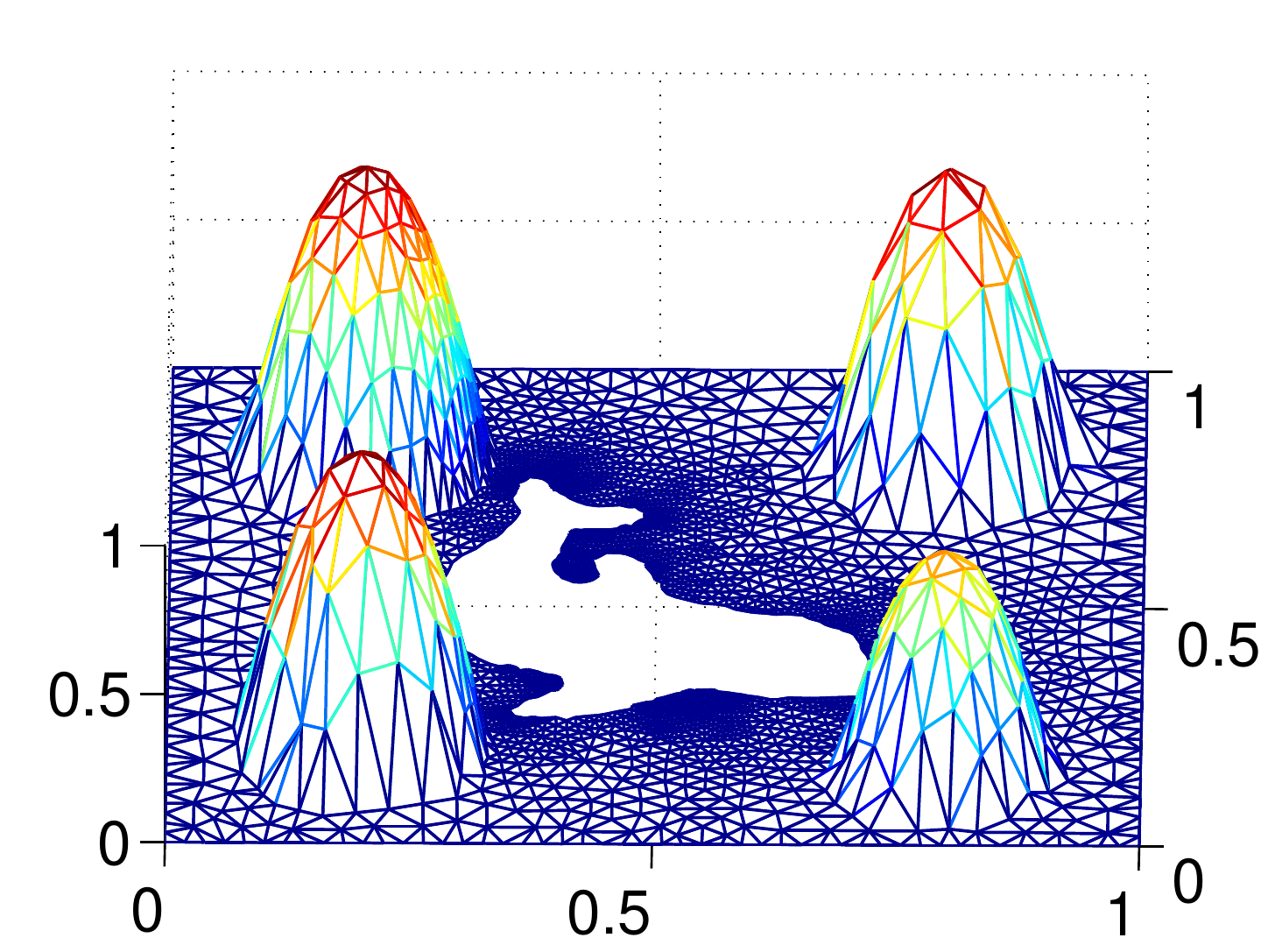}\label{fig:obstacle}}}
{\subfigure[]{\includegraphics[width=0.3\textwidth]{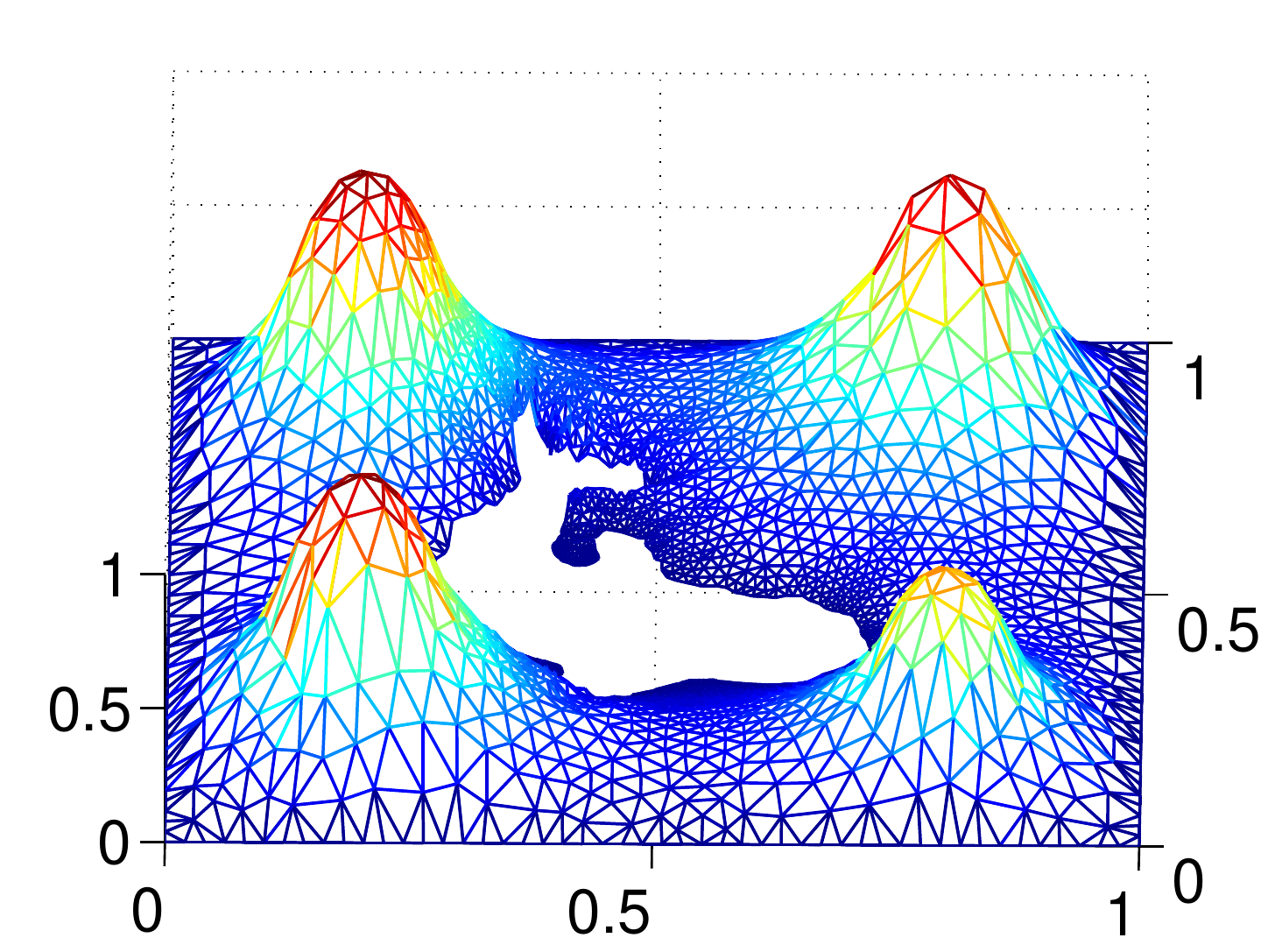}\label{fig:solution}}}
\caption{\subref{fig:domain}~A point cloud, \subref{fig:obstacle}~obstacle~$g$, and \subref{fig:solution}~computed solution~$u$ for the obstacle problem~\eqref{eq:obstacle}.}
\label{fig:dolphin}
\end{figure}

\subsection{Monge-Amp\`ere equation}\label{sec:MA}

For our final example, we consider the \MA equation
\bq\label{eq:MA}
\begin{cases}
-\det(D^2u) + f = 0, & x\in\Omega\\
u = g, & x\in\partial\Omega\\
u \text{ is convex.}
\end{cases}
\eq

This PDE is elliptic only on the space of convex functions.  However, as in~\cite{FroeseTransport}, we can make use of the globally elliptic extension
\begin{multline*}
-\min\limits_{\theta\in[0,\pi/2)}\left\{\max\left\{\frac{\partial^2u}{\partial e_\theta^2},0\right\}\max\left\{\frac{\partial^2u}{\partial e_{\theta+\pi/2}^2},0\right\} \right. \\ \left. +\min\left\{\frac{\partial^2u}{\partial e_\theta^2},0\right\}+\min\left\{\frac{\partial^2u}{\partial e_{\theta+\pi/2}^2},0\right\}\right\} + f = 0. 
\end{multline*}

As with the approximations of the eigenvalues, this minimum is approximated using derivatives in finitely many ($\sim\pi/d\theta$) directions.  We let the domain $\Omega$ be an ellipse with semi-major axis of length one and semi-minor axis of length $1/\sqrt{2}$.  Computations are performed on a uniform point cloud augmented by a uniform discretisation of the boundary.  The nonlinear systems were solving using a damped Newton's method as in~\cite{FO_MATheory}.

We consider two examples: a $C^2$ solution defined by
\[ u(x,y) =  e^{x^2+y^2}, \quad f(x,y) = (1+x^2+y^2)e^{x^2+y^2}\]
and a $C^1$ solution for which the ellipticity is degenerate in an open set,
\[ u(x,y) = \frac{1}{2}\max\left\{\sqrt{x^2+y^2}-0.2,0\right\}^2, \quad f(x,y) = \max\left\{1- \frac{0.2}{\sqrt{x^2+y^2}},0\right\}. \]
These functions are displayed in Figure~\ref{fig:MA}.

We begin with the smooth example.  Table~\ref{table:MAC2} indicates that the approximations converge, but as expected for this monotone scheme, the order of convergence is low.  

This situation can be improved by using the monotone scheme as the foundation for a higher-order filtered scheme of the form of~\eqref{eq:fdfilter}.  To do this, we use a second-order accurate finite difference approximation $F_A$ of
\[ -(u_{xx}u_{yy}-u_{xy}^2), \]
which is defined on the same (uniform) point cloud.  As discussed in subsection~\ref{sec:filter}, the formal discretisation error is independent of the size of the angular resolution~$d\theta$.  We take advantage of this fact and choose a larger angular resolution of $d\theta = 2h^{1/3}$.  This allows for a smaller search radius~$r$ and a lower boundary resolution~$h_B$. The results for this filtered scheme are also displayed in Table~\ref{table:MAC2}, which demonstrates that the filtered method is both less expensive and significantly more accurate.  In particular, for a given spatial resolution~$h$, fewer discretisation points are needed (because of the reduced boundary resolution), and the observed accuracy is second-order in~$h$.

We use the same filtered method to compute the~$C^1$ solution.  This solution is not classical and the ellipticity is degenerate; Newton's method applied to the non-monotone scheme on its own is not stable.  However, by filtering with the monotone scheme, we are able to obtain first-order convergence in~$h$.

\begin{figure}[htp]
\centering
{\subfigure[]{\includegraphics[width=0.45\textwidth]{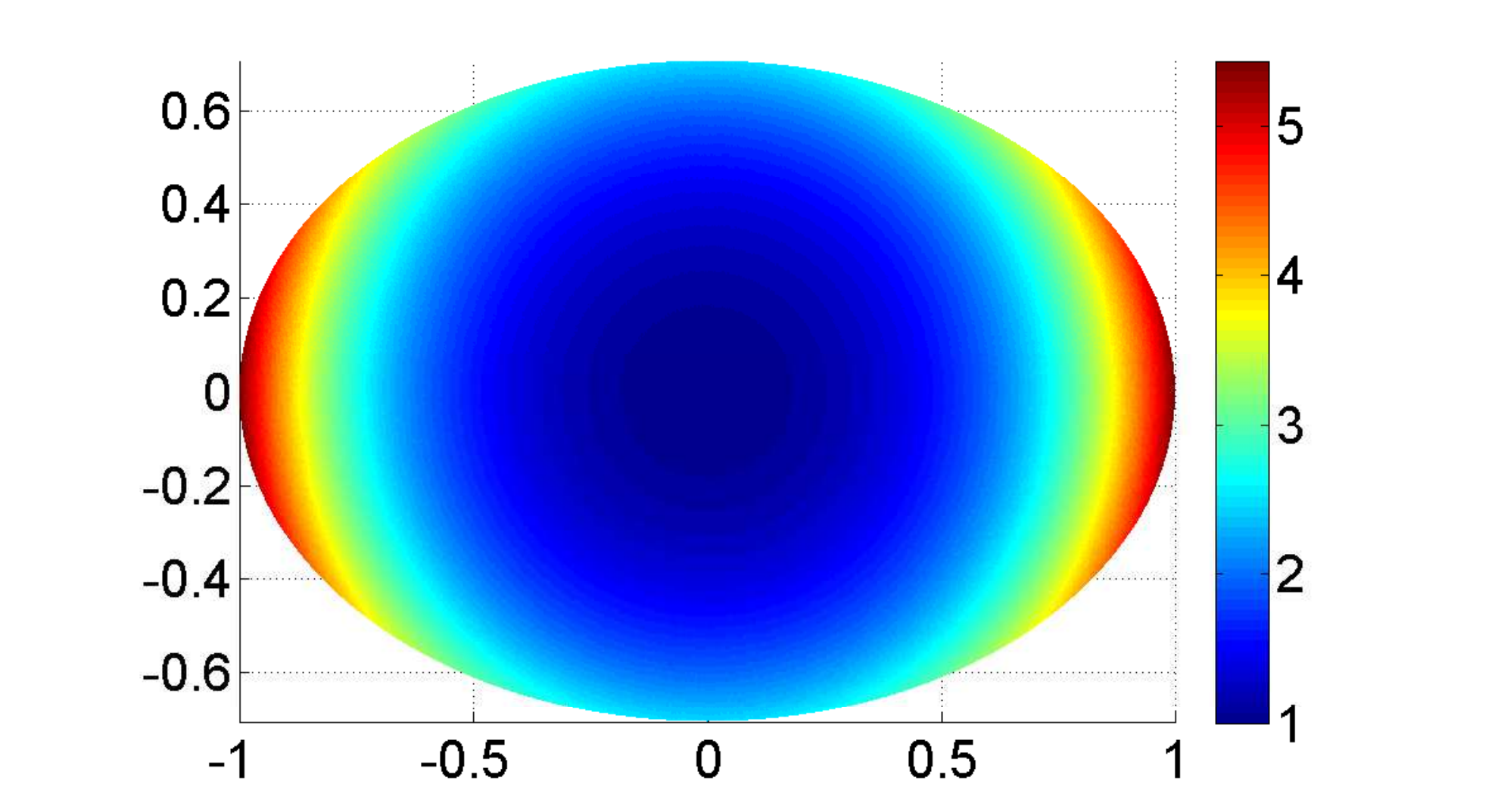}\label{fig:MA_fC2}}}
{\subfigure[]{\includegraphics[width=0.45\textwidth]{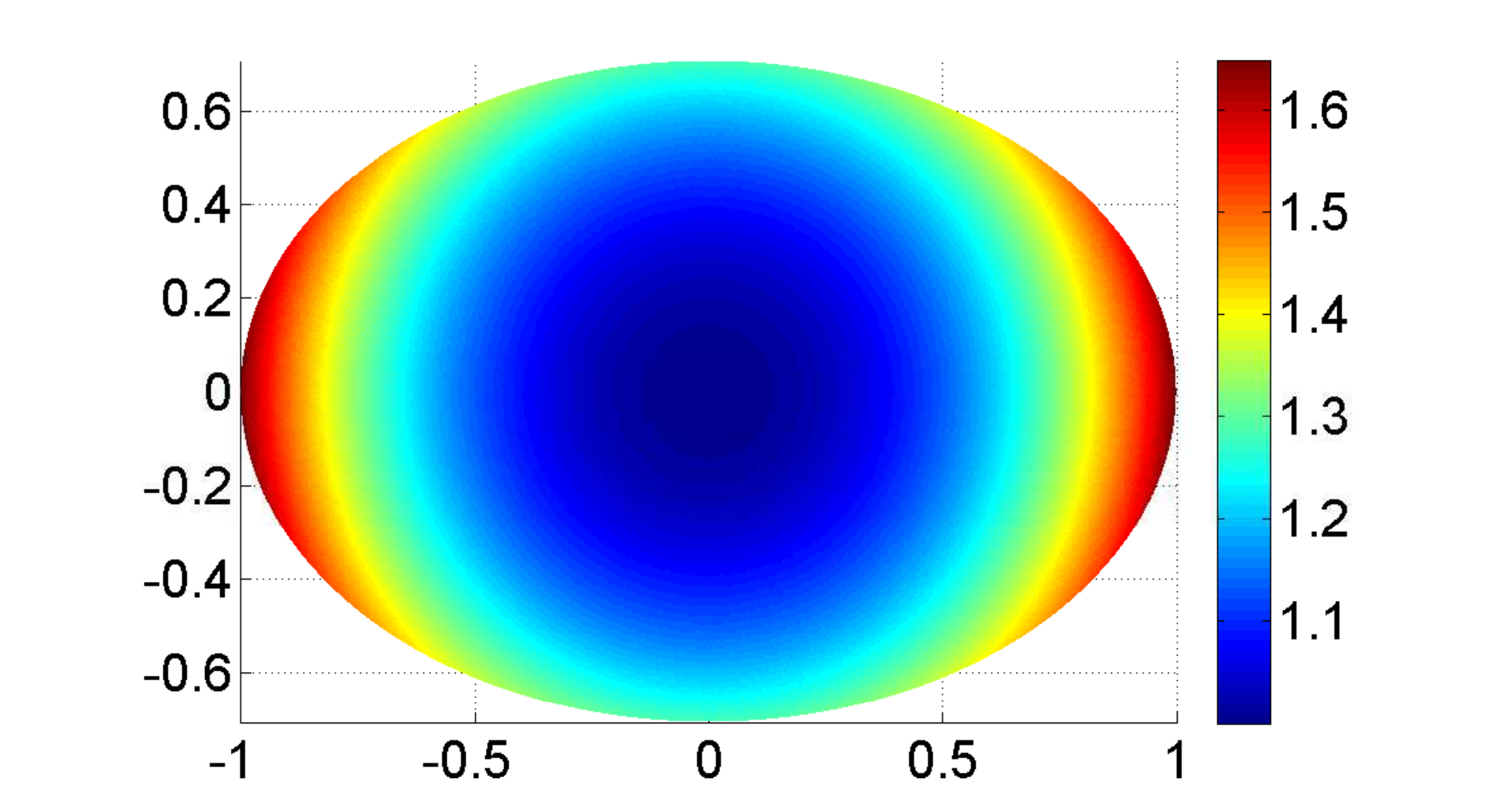}\label{fig:MA_uC2}}}
{\subfigure[]{\includegraphics[width=0.45\textwidth]{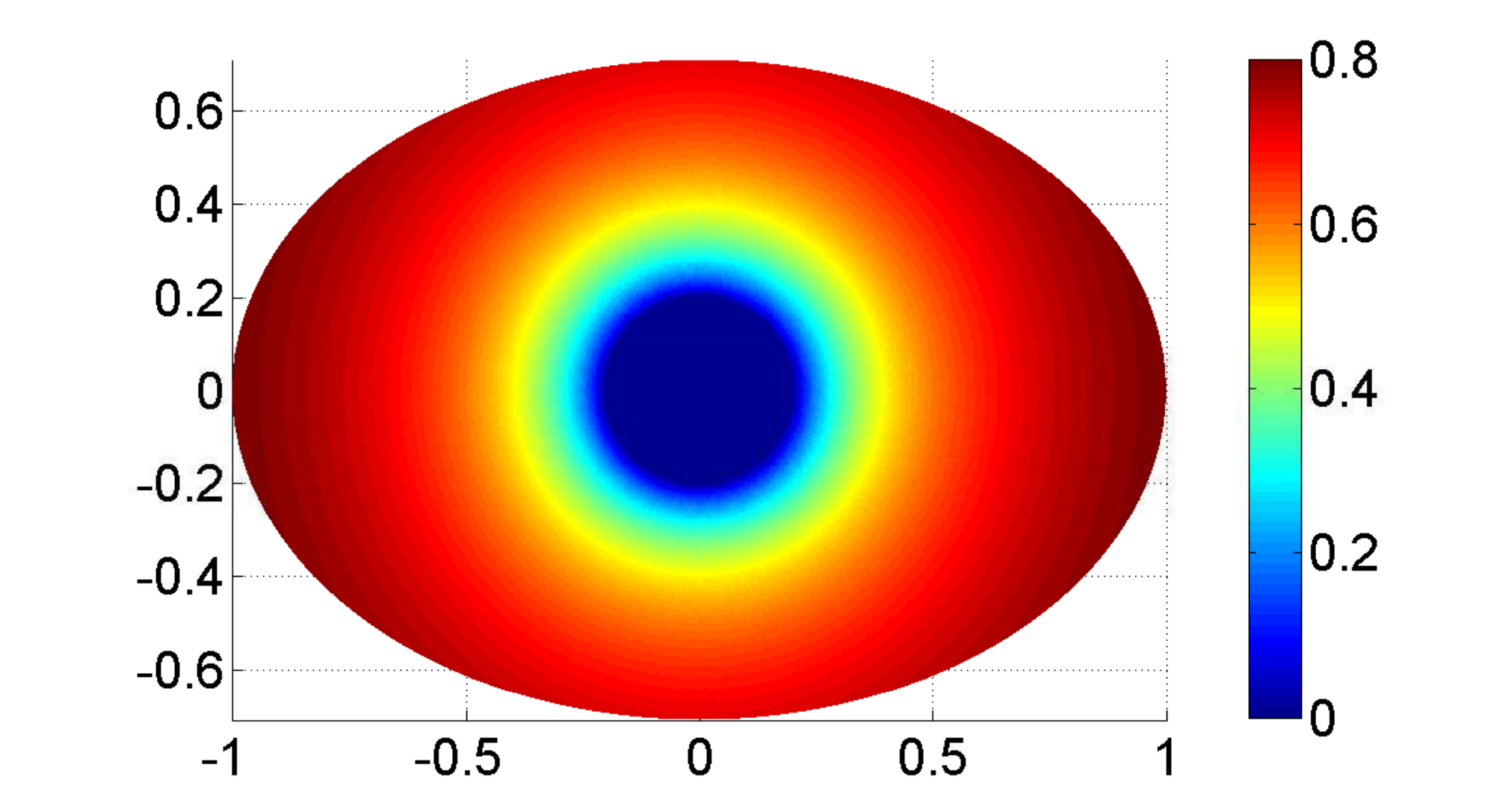}\label{fig:MA_fC1}}}
{\subfigure[]{\includegraphics[width=0.45\textwidth]{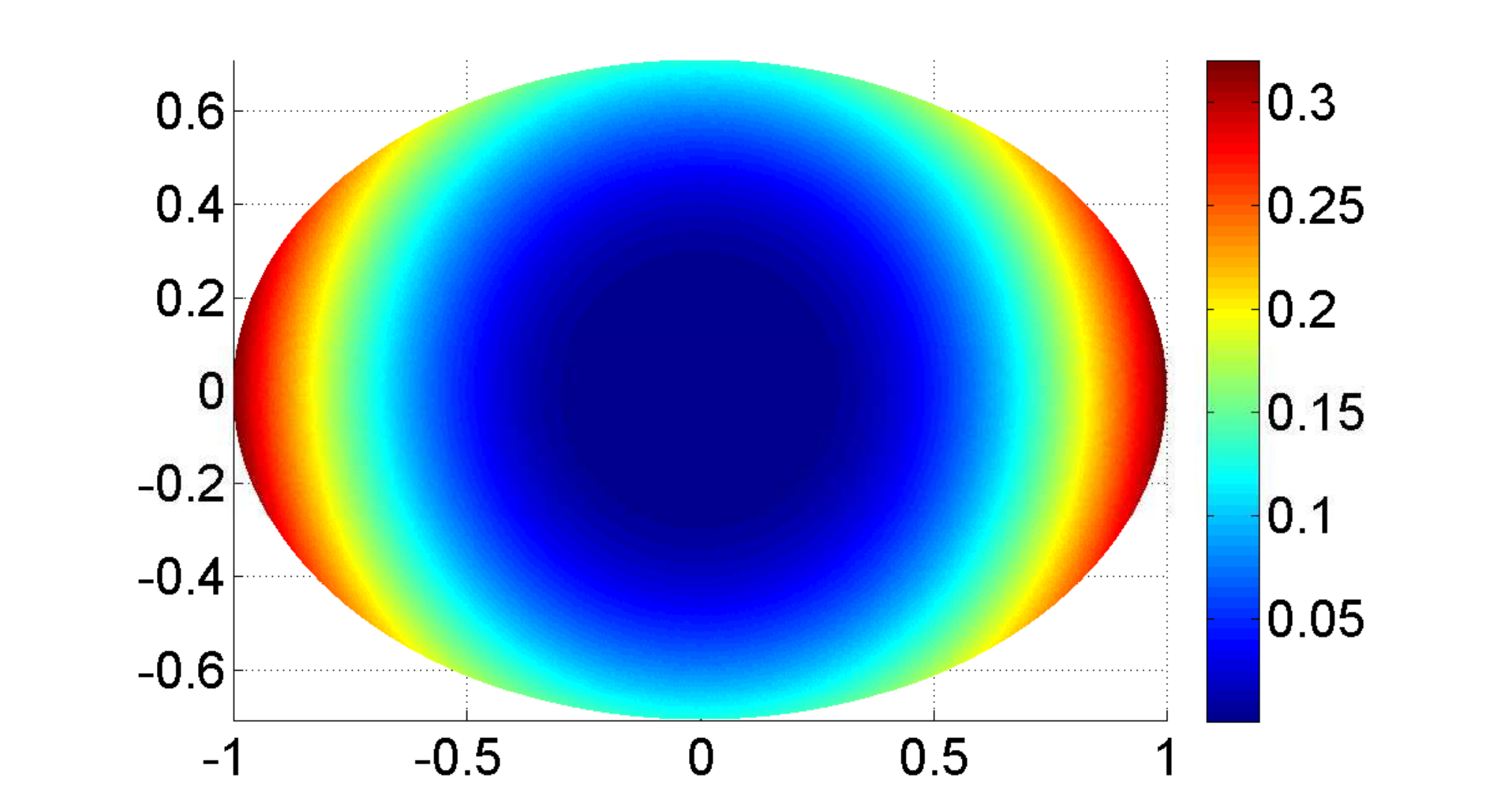}\label{fig:MA_uC1}}}
\caption{Right-hand side $f$ and solution $u$ for \subref{fig:MA_fC2},\subref{fig:MA_uC2}~$C^2$ and \subref{fig:MA_fC1},\subref{fig:MA_uC1}~$C^1$ solutions of the \MA equation~\eqref{eq:MA}.}
\label{fig:MA}
\end{figure}

\begin{table}[htp]
\centering
\begin{tabular}{c|ccc|ccc}
 & \multicolumn{3}{|c|}{Monotone} & \multicolumn{3}{c}{Filtered}\\
$h$ & $N$ & Max Error & Rate ($h$) & $N$ & Max Error & Rate ($h$)  \\
\hline
2/32  & 1,280  & $1.0\times10^{-3}$ & --- &945    & $7.8\times10^{-4}$ & ---\\
2/64  & 4,298  & $4.1\times10^{-4}$ & 1.33&3,247  & $9.0\times10^{-5}$ & 3.11\\
2/128 & 14,799 & $3.9\times10^{-4}$ & 0.09&11,545 & $2.6\times10^{-5}$ & 1.81 \\
2/256 & 52,590 & $2.8\times10^{-4}$ & 0.49&42,646 & $6.1\times10^{-6}$ & 2.07\\
2/512 & 191,467& $1.7\times10^{-4}$ & 0.72&161,417& $1.5\times10^{-6}$ & 2.01
\end{tabular}
\caption{Convergence results for a $C^2$ solution of the \MA equation~\eqref{eq:MA}.}
\label{table:MAC2}
\end{table}

\begin{table}[htp]
\centering
\begin{tabular}{cccc}
$h$ & $N$ & Max Error & Rate ($h$)  \\
\hline
2/32  & 945  & $3.0\times10^{-3}$ & --- \\
2/64  & 3,247  & $1.3\times10^{-3}$ & 1.17 \\
2/128 & 11,545 & $4.1\times10^{-4}$ & 1.71\\
2/256 & 42,646 & $1.7\times10^{-4}$ & 1.26 \\
2/512 & 161,417& $8.1\times10^{-5}$ & 1.07 
\end{tabular}
\caption{Convergence results for a $C^1$ solution of the \MA equation~\eqref{eq:MA}.}
\label{table:MAC1}
\end{table}

\section{Conclusions}\label{sec:conclusions}

We introduced new monotone meshfree finite difference methods for solving elliptic equations that depend on either the eigenvalues of the Hessian or other second directional derivatives.  The key to accomplishing this is to select finite difference stencils that align as closely as possible with the direction of interest, which can be accomplished as long as the search neighbourhood is sufficiently large relative to the resolution of the point cloud. These schemes are monotone, and we proved that they converge to the viscosity solution of the associated PDE.  They can also serve as the foundation for provably convergent higher-order filtered methods.

The methods were implemented and tested on a degenerate linear elliptic equation, the convex envelope equation, an obstacle problem, and the \MA equation.  Numerical tests demonstrated convergence on highly unstructured (eg random) point clouds, complicated domains, degenerate examples, and problems where the solution is only Lipschitz continuous.

Future work will extend these ideas to three dimensions and develop local criteria for the search neighbourhoods in order to improve the benefits of adaptivity.

\bibliographystyle{plain}
\bibliography{Meshfree}

\end{document}